\UseRawInputEncoding 
\documentclass[11pt]{article}
\bibliographystyle{abbrv}
\usepackage{latexsym}
\usepackage{mathrsfs}
\usepackage{amsmath,amsthm}
\usepackage{graphicx}
\usepackage{epsfig}
\usepackage{epstopdf}
\usepackage{amssymb}
\usepackage{CJK}
\usepackage{color}
\usepackage{subcaption}
\usepackage{mathabx}
\usepackage{indentfirst}

\newtheorem{thm}{Theorem}[section]

\newtheorem{lem}[thm]{Lemma}

\newtheorem{alg}[thm]{Algorithm}
\DeclareGraphicsExtensions{.eps,.eps.gz}
\DeclareGraphicsRule{*}{eps}{*}{}
\oddsidemargin=0 cm
\topmargin=0 cm
\textwidth=16 true cm \textheight=23 true cm
\normalsize \rm
\parindent=16pt
\DeclareGraphicsRule{*}{eps}{*}{}  \linespread{1.2}



\captionsetup{labelsep=period, tablewithin=section}


\begin{document}
\title{The number of maximal matchings in polygon rings}
\author{Chengqi Li$^{a}$, Jinhui Yin$^{a}$, Lingjuan Shi$^a$\footnote{Corresponding author.}
\date{\small $^{a}$ School of Software, Northwestern Polytechnical University, Xi'an, Shaanxi 710072, P. R. China\\
E-mails: lichengqi@mail.nwpu.edu.cn, jinhuiyin@mail.nwpu.edu.cn, shilj18@nwpu.edu.cn}
}

\maketitle
\begin{abstract}
	A matching of graph $G$ is maximal if it cannot be expanded by adding any edge to create a larger matching.
In this paper, for a hexagonal ring $H$ with $n$ hexagons, we show that the number of maximal matchings of $H$
equals to the trace of the product of $n$ matrices, each of which is $S$, $L$, or $R$ according to the type of the connection mode of $H$.
Finally, we extend this conclusion to arbitrary polygon rings and provide an algorithm to determine the transition matrices of polygon chains (rings).
	
\textbf{Keywords:} Polygon rings; hexagon rings; maximal matching; trace of matrix
\end{abstract}
\baselineskip=0.3in

\section{Introduction and Preliminaries}

A \emph{polygon chain} $Z$ with $n$ faces is a $2$-connected planar graph composed of $n$ cycles
each of which has length at least $4$, and all of these cycles are attached together in a linear order and each vertex of $Z$ is shared by at most two such cycles. These $n$ cycles are called \emph{faces} of $G$, and two faces are \emph{adjacent} if they share exactly one edge.
Here we note that any two faces in $Z$ share at most one edge, and any edge in shared by at most two faces. An example is depicted in Fig. \ref{fig:polygon-ring}(a).
\begin{figure}[h]
\centering
\includegraphics[width=0.9\linewidth]{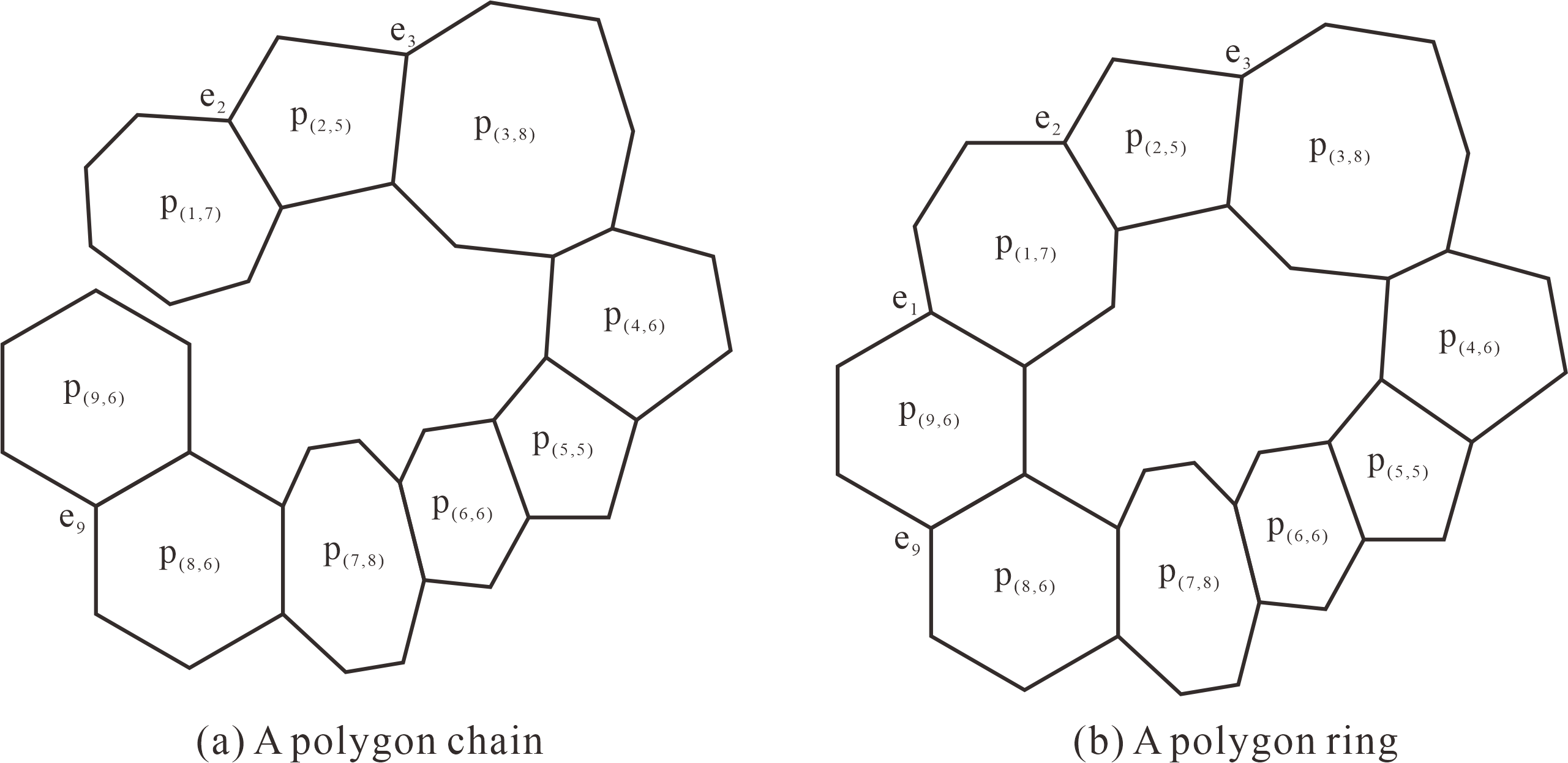}
\caption{Examples for polygon chain and polygon ring.}
\label{fig:polygon-ring}
\end{figure}

Let $Z$ be a polygon chain with $n$ faces, $n\geq2$ . We label the $n$ faces of $Z$ successively as $p_{(1,s_1)}, p_{(2,s_2)},\ldots,p_{(n,s_n)}$, where $p_{(1,s_1)}$ and $p_{(n,s_n)}$ are the two terminal faces, and the $i$th face $p_{(i,s_i)}$ is an $s_i$-length cycle.
For $i\in [2,n]$, $p_{(i-1,s_{i-1})}$ and $p_{(i,s_i)}$ are adjacent, and we denote the edge shared by them as $e_i$, $i\in [2,n]$.
For any $j\in [2,n-1]$, $p_{(j,s_j)}$ shares $e_j$ with $p_{(j-1,s_{j-1})}$ and shares $e_{j+1}$ with $p_{(j+1,s_{j+1})}$.
Along the clockwise direction of face $p_{(j,s_j)}$, if there are exactly $k_j$ edges between $e_j$ and $e_{j+1}$, then we denote the connection type of $p_{(j+1,s_{j+1})}$ to $p_{(j,s_j)}$ by $t(s_{j},k_j)$, where $k_j\in[1,s_{j}-3]$.
Now, we can more precisely denote a polygon ring with $n$ faces by $t(s_{1},*)t(s_{2},k_2)\cdots t(s_{n-1},k_{n-1})t(s_{n},*)$.
See Fig. \ref{fig:polygon-ring}(a) for a polygon chain $t(7,*)t(5,2)t(8,4)t(6,3)t(5,2)t(6,2)t(8,3)t(6,3)t(6,*)$ with $9$ faces. Clearly, each polygon chain has a unique such representation if we start to read from one fix terminal hexagon.

By attaching the two faces $p_{(1,s_1)}$ and $p_{(n,s_n)}$ together, we obtain a graph, which is called a polygon
ring if it can be embedded on the plane.
More specifically,
if we glue a $2$-edge (the two incident vertices both have degree $2$ in the original polygon chain) of face $p_{(1,s_1)}$ with a $2$-edge of face $p_{(n,s_n)}$ such that the obtained new graph is planar, then the new graph is called a \emph{polygon ring}, and
the adhesive edge is denoted by $e_1$.
As the above description of polygon chain, a polygon ring with $n$ faces can be denoted by $t(s_{1},k_1)t(s_{2},k_2)\cdots t(s_{n-1},k_{n-1})t(s_{n},k_n)$, where $k_1$ ($k_n$ resp.) is the number of edges from $e_1$ to $e_{2}$  (from $e_n$ to $e_1$) along the clockwise direction of face $p_{(1,s_1)}$ ($p_{(n,s_n)}$ resp.).
Fig. \ref{fig:polygon-ring}(b) is a polygon ring $t(7,3)t(5,2)t(8,4)t(6,3)t(5,2)t(6,2)t(8,3)t(6,3)t(6,3)$ corresponding to the polygon chain in Fig. \ref{fig:polygon-ring}(a).

If all the faces of a polygon chain are hexagons, then it is a \emph{hexagonal chain}. Accordingly, we can get a \emph{hexagonal ring} (see Fig. \ref{fig:hexagon-chain-ring} for an example).
Here, we need to note that in corresponding to reference \cite{shi2023counting},
the type of the $i$-th face in a hexagonal chain is $t(6,2)$ if the next hexagon $h_{i+1}$ is linearly adhesive, is $t(6,1)$ if the next hexagon $h_{i+1}$ is left angularly adhesive, is $t(6,3)$ if the next hexagon $h_{i+1}$ is right angularly adhesive.
Since all faces of hexagonal chains and hexagonal rings are $6$-length cycles, the symbol $s_j$ in the labels $p(j,s_j)$ and $t(s_j,k_j)$ as depicted above can be omitted. And for a hexagonal chain with $n$ hexagons, we can delete its first and last signs from the above definition of type, and its type can be denoted by $M_2M_3\cdots M_{n-1}$ where $M_i=t(1), t(2)$ or $t(3)$.
For example, the type of the hexagon chain in Fig. \ref{fig:hexagon-chain-ring}(a) is $t(3)t(3)t(1)t(3)t(3)t(3)t(2)t(2)t(3)$,
and correspondingly, the type of the hexagon ring in Fig. \ref{fig:hexagon-chain-ring}(b) is $t(2)t(3)t(3)t(1)t(3)t(3)t(3)t(2)t(2)t(3)t(3)$.\\
\begin{figure}[h]
\centering
\includegraphics[width=0.9\linewidth]{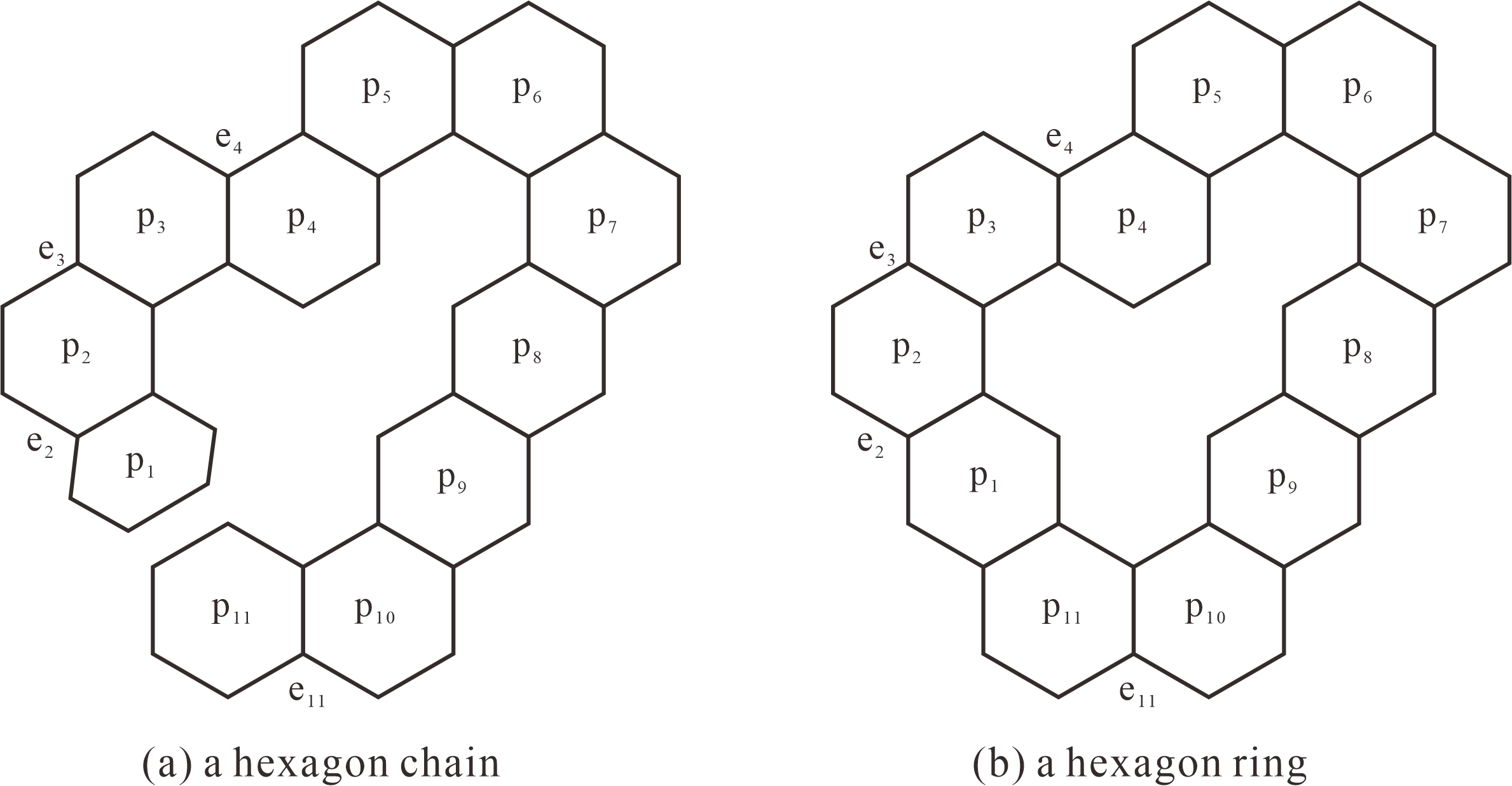}
\caption{Examples for hexagon chain and hexagon ring.}
\label{fig:hexagon-chain-ring}
\end{figure}

Hexagonal rings which is composed of several regular hexagons are molecular graphs of cyclic aromatic hydrocarbons called Cycloarenes \cite{staab1983cycloarenes}.
Cyclic aromatic hydrocarbons, such as annulenes, polycyclic aromatic hydrocarbons, and carbon nanobelts (CNBs), have attracted extensive attention due to their unique structural and electronic features.
Studying the topological structure of cyclic aromatic hydrocarbons is crucial for understanding the properties of the resulting materials.

A \emph{matching} $M$ of graph $G$ is a subset of edges, and any two edges in $M$ do not share a common vertex.
The number of edges in $M$ is referred to as its \emph{size}.
If a vertex of $G$ is incident with an edge in $M$, then we say that
the vertex is \emph{covered} by $M$, otherwise, \emph{uncovered} by $M$.
The \emph{maximum matching} of $G$ is a matching having the maximum number of edges.
A matching $M$ is considered \emph{maximal} if it does not contained in any larger matching of $G$.
We use the notation $\Psi(G)$ to denote the number of all the maximal matchings in graph $G$.
A matching covering all vertices of graph $G$ is called \emph{perfect}.
It can be observed that both maximum and perfect matchings are always maximal, but the reverse is not necessarily true.
So far, there have been many research achievements regarding maximum matching and perfect matching (see \cite{cyvin1988kekule}, \cite{galil1986efficient}, \cite{lovasz1986matching} etc.), but there are few results concerning maximal matching.

Maximal matching serves as a model for many physical and technical problems, such as block allocation of sequential resources or adsorption of dimers on structured substrates or molecules.
T.~Do{\v{s}}li{\'c} et al. (see \cite{dovslic2021maximal},\cite{dovslic2015saturation},\cite{dovslic2016counting}) obtained some results on benzene-type graphs and linear polymers.
J.~G{\'o}rska and Z.~Skupie{\'n} \cite{gorska2007trees}, M.~Klazar \cite{klazar1997twelve} have counted the number of maximal matchings in trees.
Inspired by the method of the transition matrix and Hosoya vector of M. Oz et al. \cite{oz2022computing} and R. Cruz et al. \cite{cruz2017computing}, the corresponding author of this article and Deng \cite{shi2023counting} using maximal matching vectors obtained the maximal matching numbers of hexagon chains.

In this paper, we discuss the counting problems of maximal matchings in any polygon rings.
And based on the maximal matching vector and transfer matrix technique, we obtain a specific formula for calculating the number of maximal matchings in polygon rings, especially in the hexagonal rings.
The following three matrices have important applications in the future and is depicted in reference \cite{shi2023counting}.

\setlength{\arraycolsep}{1.2pt}
\[
S=\begin{pmatrix}
        1&1&1&1&1&1&1&0&0\\
		0&1&0&1&0&0&1&1&0\\
		0&0&1&1&0&1&0&0&1\\
		0&0&0&1&1&1&1&0&0\\
		1&0&0&1&1&1&1&0&0\\
		0&1&0&0&0&0&0&1&0\\
		0&0&1&0&0&0&0&0&1\\
		1&0&1&1&1&1&1&0&0\\
		1&1&0&1&1&1&1&0&0\\
\end{pmatrix},\qquad
L=\begin{pmatrix}
        1&1&1&1&1&1&1&0&0\\
		1&0&1&0&0&0&0&0&1\\
		0&1&0&1&0&0&1&1&0\\
		1&0&1&0&0&0&0&0&0\\
		1&0&1&1&0&1&0&0&0\\
		0&0&1&0&0&0&0&0&1\\
		0&1&0&1&0&0&0&0&0\\
		1&1&1&1&0&1&0&0&0\\
		1&0&1&1&1&1&1&0&0\\
\end{pmatrix},\qquad
R=\begin{pmatrix}
        1&1&1&1&1&1&1&0&0\\
		0&0&1&1&0&1&0&0&1\\
		1&1&0&0&0&0&0&1&0\\
		1&1&0&0&0&0&0&0&0\\
		1&1&0&1&0&0&1&0&0\\
		0&0&1&1&0&0&0&0&0\\
		0&1&0&0&0&0&0&1&0\\
		1&1&0&1&1&1&1&0&0\\
		1&1&1&1&0&0&1&0&0\\
\end{pmatrix}.
\]

The main conclusions of this paper are listed below.
\begin{thm}\label{Hexagond}
Let $H$ be a hexagonal ring with type $M_1M_2\cdots M_n$, where $M_i=t(1), t(2)$ or $t(3)$. Then the number of maximal matchings of $H$ can be computed by the following formula:
$$\Psi(H)=tr(\prod\limits_{i=1}^{n}f(M_i)),$$
	where $f(M_i)=L$ if $M_i=t(1)$, $f(M_i)=S$ if $M_i=t(2)$, $f(M_i)=R$ if $M_i=t(3)$, the matrix $L, S, R$ are given as above and $tr(A)$ is the trace of a matrix $A$.
\end{thm}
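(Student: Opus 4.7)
The plan is to adapt the transfer matrix method developed for hexagonal chains in \cite{shi2023counting} to the cyclic setting. For each shared edge $e_i$ between consecutive hexagons, I would introduce a $9$-dimensional state vector whose coordinates enumerate the possible local configurations of a maximal matching on the subgraph ``to the left of'' $e_i$: the states encode whether $e_i$ belongs to the matching, and (if not) the covering status of its two endpoints, together with a bit indicating whether some vertex already traversed is uncovered and could still be saturated through $e_i$ or the next hexagon. The matrices $S$, $L$, $R$ given in the excerpt are then precisely the transition matrices for the three connection types $t(2)$, $t(1)$, $t(3)$: the entry in row $j$, column $k$ counts the number of ways to extend a partial maximal matching through a single hexagon of the given type so that the state on the incoming edge is $j$ and the state on the outgoing edge is $k$. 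For a hexagonal chain this was the content of \cite{shi2023counting}, and I would invoke it directly.

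The new ingredient for rings is purely linear-algebraic. The edge $e_1$ plays two roles simultaneously: it is the outgoing edge of the last hexagon $p_n$ and, after the cyclic identification, the incoming edge of the first hexagon $p_1$. Hence a maximal matching of $H$ corresponds to a cyclic sequence of states $j_1,j_2,\ldots,j_n,j_1$ such that each consecutive pair $(j_i,j_{i+1})$ is realised by one of the transitions counted by $f(M_i)$. Summing the product of transition counts over all such closed sequences is exactly
\[
\sum_{j_1,\ldots,j_n} \prod_{i=1}^{n} \bigl(f(M_i)\bigr)_{j_i,j_{i+1}} \;=\; \mathrm{tr}\!\left(\prod_{i=1}^{n} f(M_i)\right),
\]
which is the desired identity. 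Thus the overall structure is: (a) quote the state dictionary and the transition-matrix entries of $S,L,R$ from \cite{shi2023counting}; (b) observe that rings are obtained from chains by identifying $e_1$ on both ends; (c) invoke the standard trace identity for cyclic transfer matrices.

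The step that requires the most care is step (a), specifically verifying that the nine local states form a sufficiently fine invariant for the \emph{maximality} condition, which is a global property, to be detected by purely local transitions across the adhesive edge $e_1$. In a chain this issue is hidden by the choice of boundary vectors at the two terminal faces; in a ring there are no terminal faces, so I must ensure that the state definition at $e_1$ is symmetric with respect to the two faces sharing it, and that every potential augmenting edge crossing $e_1$ is forbidden by some incompatible state-pair $(j_n,j_1)$. Once this symmetry and completeness are checked, the trace formula is immediate. The remainder of the argument, including the identification $f(t(1))=L$, $f(t(2))=S$, $f(t(3))=R$, follows from the convention for connection types established for hexagonal chains and so requires no further work beyond citing \cite{shi2023counting}.
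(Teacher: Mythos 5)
Your overall strategy --- cut the ring at $e_1$, reuse the chain transfer matrices, and argue that consistency of the boundary data at the cut turns the matrix product into a trace --- is the same as the paper's. But the step you defer in part (a) is not a routine verification to be quoted from \cite{shi2023counting}: it is the entire content of the paper's proof, and the mechanism you propose for it does not directly apply. The nine coordinates of the maximal matching vector in \cite{shi2023counting} are \emph{not} a partition of the maximal matchings into local states at an edge; they are counts attached to nine derived graphs and conditioned subfamilies ($\Psi(G)$, $\Psi(G-x)$, $\Psi(G|x)$, etc.), and these families overlap (every matching counted by $\Psi(G|x)$ is also counted by $\Psi(G)$). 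Consequently the entry $M[j,k]$ is a coefficient in a linear recurrence between such vectors, not the number of ways to ``transition from state $j$ to state $k$,'' and the closed-walk identity $\sum_{j_1,\dots,j_n}\prod_i (f(M_i))_{j_i,j_{i+1}}=\mathrm{tr}\bigl(\prod_i f(M_i)\bigr)$ cannot be read off as a count of matchings: summing over all row indices $j_1$ would grossly overcount.

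What the paper actually does is partition the maximal matchings of $H$ into nine classes according to genuine local data at the cut edge $dc$ --- whether the edges $ed$ and $fc$ of $H_n$ lie in the matching and, when they do not, whether $e$ and $f$ are covered --- and then proves nine separate lemmas (via an auxiliary graph $F$ obtained by gluing an arbitrary graph $K$ along $dc$) showing that the class indexed by $i$ contains exactly $M[i,i]$ matchings; the trace is the sum of these nine diagonal entries. The diagonal appears because closing the ring identifies $a$ with $d$ and $b$ with $c$, so the row query imposed at $ab$ and the column class read off at $dc$ carry the same index. To complete your argument you would need to supply exactly this case analysis (or, alternatively, re-derive $S$, $L$, $R$ as honest transition matrices between a bona fide state partition, which is not how they are defined in \cite{shi2023counting}); citing the chain result and invoking the standard trace identity does not close the gap.
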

Applying Theorem \ref{Hexagond}, for the hexagon ring $G$ as depicted in Fig. \ref{fig:hexagon-chain-ring} (b), we have
$\Psi(G)=tr(S \times R \times R \times L \times R \times R \times R \times S \times S \times R \times R )= 2804280 $.

In the following, for the sake of simplicity,
we denote the product of these $n$ matrices in Theorem \ref{Hexagond} by $M:=\prod\limits_{i=1}^{n}(f(M_i))$.
Furthermore, the Theorem \ref{Hexagond} can be extended to any polygon ring by the same method.
\begin{thm}\label{Ploygon}
	For any polygon ring $Z$ composed of $n$ polygons with type $M_1M_2\cdots M_n$, where $M_i\in\{t(s_i,1), t(s_i,2), \ldots t(s_i,s_i-4), t(s_i, s_i-3)$, the number of maximal matchings of $Z$ is
	$$\Psi(Z)=tr(\prod\limits_{i=1}^{n}f(M_i)),$$
	where $f(M_{i})=T_{s_i,1}$ if $M_i=t(s_i,1)$, $f(M_{i})=T_{s_i,2}$ if $M_i=t(s_i,2)$, $\ldots$, $f(M_{i})=T_{s_i,s_i-3}$ if $M_i=t(s_i,s_i-3)$, and the matrix $T_{s_i,j}$ can be easily computed by the Algorithm \ref{algorithm}, $j\in\{1, 2\cdots, s_i-3\}$.
\end{thm}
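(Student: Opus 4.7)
The plan is to generalize the strategy already used to prove Theorem \ref{Hexagond}. At each adhesive edge $e_i$ of a polygon ring I would define a \emph{local state} recording, for the two endpoints $u,v$ of $e_i$, whether each is (a) already covered by the partial matching built so far, (b) uncovered but with every chain-side neighbor already covered, or (c) uncovered with at least one chain-side neighbor still uncovered (and therefore creating a ``demand'' that must be met by the next face if the matching is to be maximal). These three possibilities per endpoint give $3\times 3=9$ states, independent of the face sizes. For any polygon chain $Z$ I would then assemble a maximal-matching vector $v_Z\in\mathbb{Z}^9$ whose $j$-th entry counts the partial matchings of $Z$ whose configuration on the current adhesive edge corresponds to state $j$ and which can still be completed to a maximal matching.

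The second step is to establish the transfer. When a new $s$-face $p$ is attached to $Z$ through a $t(s,k)$ connection, the new adhesive edge lies $k$ edges away from the old one around the cycle $p$. Enumerating, for every pair (input state, output state), the number of matchings of the $s$-gon $p$ that are compatible with both states (respecting coverage of the two adhesive edges and the saturation demands propagated inward) yields a matrix $T_{s,k}\in\mathbb{Z}^{9\times 9}$ with $v_{Z+p}=T_{s,k}\,v_Z$. Since $T_{s,k}$ depends only on $(s,k)$, iterating gives $v_{Z}=\bigl(\prod_{i=1}^n T_{s_i,k_i}\bigr)v_0$ for the appropriate starting vector $v_0$. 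For the hexagonal specialization $s=6$ the three matrices $T_{6,1}, T_{6,2}, T_{6,3}$ must coincide with $L,S,R$ respectively, which recovers Theorem \ref{Hexagond}. Closing a chain into a ring amounts to requiring the state at the terminal adhesive edge to equal the state at the initial adhesive edge; summing over all matching pairs of identical states replaces the matrix product by its trace, and yields $\Psi(Z)=\operatorname{tr}\!\bigl(\prod_{i=1}^n T_{s_i,k_i}\bigr)$.

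The main obstacle is not the trace formula itself (which, granted the correct transfer matrices, follows verbatim from the hexagonal argument of \cite{shi2023counting}), but rather producing Algorithm \ref{algorithm} for $T_{s,k}$ in a way that is both provably correct and uniform in $s$ and $k$. I would describe the algorithm as a direct enumeration over the $\binom{s}{?}$ internal matchings of an $s$-cycle with two distinguished edges at positions $1$ and $k+1$, indexed by the nine admissible boundary patterns at each end; the core verification is that every maximal matching of the polygon ring is counted exactly once, which reduces to the two checks that (i) the nine states partition the possible boundary configurations and (ii) each internal matching is compatible with a unique input/output state pair. Both checks are local and combinatorial, and the rest is bookkeeping.
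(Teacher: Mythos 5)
Your architecture matches the paper's: cut the ring at one adhesive edge, transport a $9$-dimensional maximal-matching vector across the faces by the matrices $T_{s,k}$ of Algorithm \ref{algorithm}, and recover $\Psi(Z)$ as a trace via a $3\times 3=9$-way case analysis at the seam (the paper's nine cases are indexed by whether each of the two edges of the last face adjacent to the cut edge lies in the matching and, if not, whether its far endpoint is covered --- structurally the same as your three possibilities per endpoint). The paper in fact omits the proof of Theorem \ref{Ploygon}, declaring it analogous to the hexagonal case, so the comparison is really with the proof of Theorem \ref{Hexagond}.

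The substantive gap is in your justification of the trace step. The nine components of $\Psi_{xy}(G)$ do \emph{not} partition the boundary configurations: they are overlapping counts of maximal matchings of four different graphs $G$, $G-x$, $G-y$, $G-x-y$ under various coverage conditions (for instance $\Psi(G)$ and $\Psi(G|x)$ count many of the same matchings), and they are not related to a partition basis by a change of coordinates, since they refer to different vertex sets. Consequently the trace identity does not ``follow verbatim'' from the standard closed-transfer-matrix argument for a genuine state partition, and your check (i) fails as stated. What the paper actually does is introduce an auxiliary graph $F$ obtained by gluing an arbitrary graph $K$ onto the cut edge, prove nine lemmas (Lemmas \ref{1.1}--\ref{1.9}) showing that the seam-case-$i$ count of maximal matchings of $F$, read at the appropriate component of $\Psi_{ab}(F)$, equals $M[i,\sigma(i)]$ times the matching component of $\Psi_{dc}(K)$, and then verify that when the ring is closed the row condition (imposed on the chain side of the cut) and the column condition (imposed on the $K$ side) coincide, so that exactly the nine diagonal entries are picked out, each once. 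That row-to-column matching for each of the nine cases is the real content of the proof; your proposal would need to supply it --- or honestly pass to a true partition basis, which would change the matrices away from $L$, $S$, $R$, $T_{s,k}$ --- rather than treat it as bookkeeping.
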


\section{Hexagonal rings}

\indent
Now, let's start to introduce some symbols that will be used later.
For a vertex $v$ in graph $G$, the \emph{neighborhood} of $v$ in $G$ is defined as $N_{G}(v)=\{u\in G| uv\in E(G)\}$.
$\Psi(G|a)$ denotes the number of maximal matchings in graph $G$ with $a$ must being covered.
$\Psi(G|bc)$ denotes the numbers of maximal matchings in graph $G$ with edge $bc$ must being contained.
Obviously, we have
$$\Psi(G|a)=\sum\limits_{y\in N_G(a)}\Psi(G-a-y),$$
$$\Psi(G|bc) = \Psi(G-b-c).$$
$\Psi^{-a}(G)$ denotes the number of maximal matchings in graph $G$ with vertex $a$ not covered.
$\Psi^{-bc}(G)$ denotes the number of maximal matchings in graph $G$ with the edge $bc$ avoided.
It should be noted that $\Psi^{-a}(G)$ is not equivalent to $\Psi(G-a)$, and $\Psi^{-bc}(G)$ is not equivalent to $\Psi(G-bc)$.
The two formers are the numbers of maximal matchings in graph $G$ with the asked property, while the two latters are the numbers of maximal matchings in the subgraph obtained by removing point $a$ or edge $bc$ from graph $G$.
$Mat[i,j]$ represents the value of the element in the $i$-th row and $j$-th column of matrix $Mat$. $Vec_{[i]}$ represents the $i$-th component of vector $Vec$.
The \emph{maximal matching vector} $\Psi_{xy}(G)$ for the edge $xy$ in graph $G$ is defined as follows:
\begin{equation}\nonumber
	\Psi_{xy}(G)=
	\left(
	\begin{array}{c}
		\Psi(G)\\
		\Psi(G-x)\\
		\Psi(G-y)\\
		\Psi(G-x-y)\\
		\Psi(G|x,y)\\
		\Psi(G-x|y)\\
		\Psi(G-y|x)\\
		\Psi(G|x)\\
		\Psi(G|y)\\
	\end{array}
	\right).
\end{equation}
We note that the edge $bc$ and vertex $a$ in the above notations $\Psi(G|a), \Psi(G|bc), \Psi^{-a}(G), \Psi^{-bc}(G)$ can be any set which is the union of several vertices and several edges.
For example, $\Psi^{-\{a,ed\}}(G-b|f,xy)$ is the number of maximal matchings in graph $G-b$, each of which covers vertex $f$, contains edge $xy$, and not cover vertex $a$, not contain edge $ed$.
Moreover, the $9$ components of the vector $\Psi_{uv}^{-\{a,ed\}}(G-b|f,xy)$ are $\Psi^{-\{a,ed\}}(G-b|f,xy)$, $\Psi^{-\{a,ed\}}(G-b-u|f,xy)$, $\Psi^{-\{a,ed\}}(G-b-v|f,xy)$, $\Psi^{-\{a,ed\}}(G-b-u-v|f,xy)$, $\Psi^{-\{a,ed\}}(G-b|u,v,f,xy)$, $\Psi^{-\{a,ed\}}(G-b-u|v,f,xy)$, $\Psi^{-\{a,ed\}}(G-b-v|u,f,xy)$, $\Psi^{-\{a,ed\}}(G-b|u,f,xy)$, $\Psi^{-\{a,ed\}}(G-b|v,f,xy)$ respectively.
It is not hard to check that $\Psi^{-\{bc,b\}}(G)=\Psi^{-b}(G)$, $\Psi^{-bc}(G|b)=\Psi(G-bc|b)$, which will be applied repeatedly without explanation in the following.
\begin{figure}[h]
\centering
\includegraphics[width=0.2\linewidth]{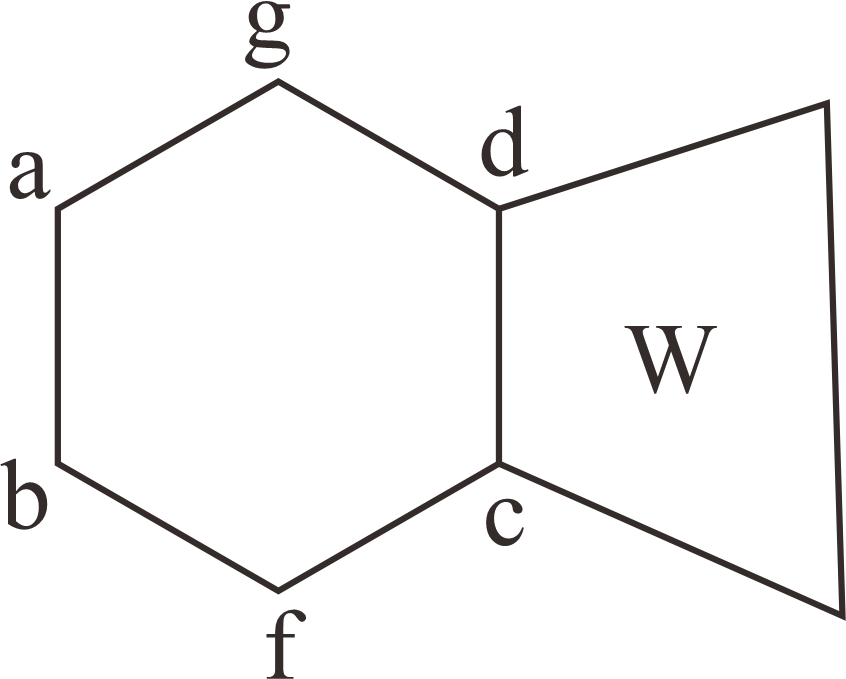}
\caption{The graph in Theorem \ref{shi-deng-process}.}\label{fig:hexagon-process}
\end{figure}

The following two Theorems computed the cases for hexagonal chains.
\begin{thm}[\cite{shi2023counting}]\label{shi-deng-process}
For the graph $G$ as depicted in Fig. \ref{fig:hexagon-process}, we have
$$\Psi_{ab}(G)=S\times\Psi_{dc}(W),~~\Psi_{ga}(G)=L\times\Psi_{dc}(W),~~\Psi_{bf}(G)=R\times\Psi_{dc}(W).$$
\end{thm}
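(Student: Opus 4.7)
The plan is to perform a systematic case analysis, expressing each of the nine components of the left-hand vector as a $\{0,1\}$-combination of the nine components of $\Psi_{dc}(W)$, and to verify that the resulting $9 \times 9$ coefficient matrix coincides with $S$, $L$, or $R$ according to which of the three target edges $ab$, $ga$, $bf$ is chosen on the hexagon. Throughout, label the hexagon so that its cyclic vertex sequence is $d, g, a, b, f, c$ (the shared edge with $W$ being $cd$), so that $ab$ is opposite to $cd$, while $ga$ and $bf$ are the two edges at cyclic distance two from $cd$.

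First I would set up the decomposition: every matching $M$ of $G$ splits uniquely into $M_H$, its restriction to the five non-shared hexagon edges $\{dg, ga, ab, bf, fc\}$, and $M_W$, its restriction to the edges of $W$, where the shared edge $cd$ may belong to either side's count but is tracked once. Then, enumerate all partial matchings $M_H$ on the hexagon — there are only finitely many such subsets — and for each one, determine (i) whether the four internal hexagon vertices $g, a, b, f$ are all covered by $M_H$ (so that no edge inside the hexagon could be added to enlarge $M$), and (ii) what constraint $M_H$ imposes on $c, d$: each of these boundary vertices is either covered from the hexagon side, forced to be covered from the $W$ side (by maximality, since otherwise $cd$ or $fc$ or $dg$ could be adjoined), or free.

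Next, for each target vector $\Psi_{ab}(G)$, $\Psi_{ga}(G)$, $\Psi_{bf}(G)$ and each of its nine entries, I would filter this enumeration through the additional row condition (e.g.\ ``vertex $a$ removed'', or ``$a$ and $b$ both covered''). Each surviving hexagon configuration translates into exactly one of the nine boundary conditions on $c, d$ that parametrize the vector $\Psi_{dc}(W)$, and the number of maximal matchings of $G$ compatible with that configuration is precisely the corresponding entry of $\Psi_{dc}(W)$. Summing over all configurations for a fixed row yields a $\{0,1\}$-linear combination that should reproduce the corresponding row of $S$, $L$, or $R$ as listed in the introduction.

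The main obstacle is bookkeeping \emph{maximality} rather than mere matching size: a hexagon configuration $M_H$ is admissible for a given row only if no non-shared hexagon edge can be added to $M \cup M_W$ for any maximal $M_W$ satisfying the induced boundary condition, which couples the internal-cover analysis in (i) with the external condition in (ii). In particular one must use the identities $\Psi^{-\{bc,b\}}(G)=\Psi^{-b}(G)$ and $\Psi^{-bc}(G|b)=\Psi(G-bc|b)$ noted earlier to correctly reduce boundary conditions at $c$ or $d$ induced by the shared edge $cd$ being present or absent in $M_H$. Once this correspondence is tabulated carefully for one of the three cases (say $\Psi_{ab}(G) = S \cdot \Psi_{dc}(W)$), the other two follow by the identical argument with the target edge relocated around the hexagon, producing the matrices $L$ and $R$.
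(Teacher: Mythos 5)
Your strategy is the right one, and it is essentially the method this paper itself relies on: Theorem \ref{shi-deng-process} is imported from \cite{shi2023counting} without proof here, but the boundary-state classification you describe (three states per boundary vertex --- matched into the hexagon, forced to be covered inside $W$, or free --- giving the $3\times 3=9$ combinations that index the components of $\Psi_{dc}(W)$) is exactly what Lemmas \ref{1.1}--\ref{1.9} do for the ring and what Algorithm \ref{algorithm} formalizes via the auxiliary graph $J'$ with pendant vertices $k_1,k_2$. So the plan is sound in outline, and the remaining work is a finite tabulation.

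There is, however, one concrete error in your step (i): you treat ``no edge inside the hexagon can be added'' as equivalent to ``the four internal vertices $g,a,b,f$ are all covered by $M_H$.'' It is not. Maximality only forbids an edge with \emph{both} endpoints uncovered; an internal vertex may stay uncovered provided each of its neighbours is covered. For example $M_H=\{ab\}$ leaves $g$ and $f$ uncovered, and it extends to a maximal matching of $G$ precisely when both $d$ and $c$ are covered on the $W$ side; this single configuration is what produces the entry $S[1,5]=1$, i.e.\ the coefficient of $\Psi(W|d,c)$ in $\Psi(G)$. If you filter hexagon configurations by ``all of $g,a,b,f$ covered,'' you discard such cases and obtain the wrong matrices. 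You do state the correct admissibility criterion later (``no non-shared hexagon edge can be added to $M\cup M_W$ for any maximal $M_W$ satisfying the induced boundary condition''), so the fix is simply to drop the parenthetical equivalence and use the coverage pattern of $g,a,b,f$ only as recorded data entering that criterion. Two smaller cautions: the coefficients are not a priori $\{0,1\}$ --- several hexagon configurations could in principle induce the same boundary condition for the same row (for larger polygons, as in Table \ref{transition matrixes}, they do), so $0/1$-ness is an output of the tabulation, not an assumption; and which of $ga$, $bf$ yields $L$ and which yields $R$ depends on the clockwise orientation convention, which must be fixed to agree with the paper's before the final comparison of matrices.
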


\begin{thm}[\cite{shi2023counting}]\label{shi-deng}
	For any benzenoid chain $H$ with $n$ hexagons and having type $M_2\cdots M_{n-1}$, $M_i\in\{t(1), t(2), t(3))\}$, we have
$$\Psi(H)=X\times S\times\prod\limits_{i=2}^{n-1}(f(M_i))\times S\times Y,$$
 where $f(M_i)=L$ if $M_i=t(1)$, $f(M_i)=S$ if $M_i=t(2)$, $f(M_i)=R$ if $M_i=t(3)$ and $X=(1,0,0,0,0,0,0,0,0)$, $Y=(1,1,1,1,1,0,0,1,1)^T$.
\end{thm}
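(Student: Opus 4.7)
The plan is to iterate Theorem \ref{shi-deng-process} along the chain, maintaining a maximal matching vector at each intermediate stage and updating it every time a new hexagon is glued on; the factors $X$, $Y$ and the two outer copies of $S$ will emerge naturally from initial and terminal book-keeping. The first ingredient is the recognition of the column vector $Y = (1,1,1,1,1,0,0,1,1)^T$ as $\Psi_{ab}(K_2)$, the maximal matching vector of the single edge $ab$: seven of its entries equal $1$ (each counting a unique maximal matching of the appropriate vertex-deleted or edge-restricted subgraph of $K_2$), while the two zero entries record the impossibility of covering a prescribed endpoint after that endpoint has been deleted. Since gluing a hexagon onto $K_2$ at $ab$ simply produces a hexagon, one application of Theorem \ref{shi-deng-process} with $W = K_2$ yields $S \cdot Y = \Psi_e(h)$ for any edge $e$ of any hexagon $h$, an equality that may also be verified directly by enumerating the five maximal matchings of the hexagon together with those of the paths $P_5$ and $P_4$ that enter the deleted-vertex components.

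With this base case in hand, I would build the chain one hexagon at a time. Starting from $h_n$ with active edge $e_n$ and $\Psi_{e_n}(h_n) = S \cdot Y$, I attach $h_{n-1}, h_{n-2}, \ldots, h_2$ in succession; at the step that attaches $h_i$, the shared edge $e_{i+1}$ plays the role of $dc$ in Theorem \ref{shi-deng-process}, and the new active edge $e_i$ lies in the linear, left-angular, or right-angular position inside $h_i$ exactly according as the chain type $M_i$ equals $t(2)$, $t(1)$, or $t(3)$. Hence Theorem \ref{shi-deng-process} left-multiplies the current vector by $f(M_i) \in \{S,L,R\}$, and telescoping the $n-2$ recursions produces
\begin{equation*}
\Psi_{e_2}(h_2 h_3 \cdots h_n) \;=\; \Bigl(\prod_{i=2}^{n-1} f(M_i)\Bigr) \cdot S \cdot Y.
\end{equation*}

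The last step attaches the terminal hexagon $h_1$ at $e_2$ to form $H$. Because $h_1$ admits no further outgoing edge, I do not pick a new active edge, but $\Psi(H)$ is by definition the first entry of $\Psi_e(H)$ for any edge $e$ of $h_1$; inspection of the three matrices shows that the first rows of $S$, $L$, and $R$ all coincide with the row vector $X \cdot S = (1,1,1,1,1,1,1,0,0)$, so invoking Theorem \ref{shi-deng-process} one more time and projecting to the first coordinate gives $\Psi(H) = X \cdot S \cdot \Psi_{e_2}(h_2 h_3 \cdots h_n)$. Substituting the telescoped expression above then yields the displayed formula. The main technical obstacle is the careful alignment of conventions: one must check against the figure accompanying Theorem \ref{shi-deng-process} that its three geometric choices of new active edge correspond respectively to the chain types $t(2),t(1),t(3)$ and hence to the matrices $S,L,R$, and verify by direct counting that $S \cdot Y = \Psi_e(h)$ holds on the nose; once these book-keeping checks are settled, the remainder is a routine telescoping induction.
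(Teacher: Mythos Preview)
Your proposal is correct and follows precisely the intended argument. Note that in the present paper Theorem~\ref{shi-deng} is not proved but quoted from \cite{shi2023counting}; however, the paper does supply a one-line proof of the polygon-chain generalisation (Theorem~\ref{polygon-chain}) which reads, in full, ``By using Lemma~\ref{process-polygon-chain} repeatedly, and note that the maximal matching vector of a complete graph with $2$ vertices is $(1,1,1,1,1,0,0,1,1)^T$, the conclusion can be obtained \ldots''---exactly your telescoping induction starting from $Y=\Psi_{ab}(K_2)$ and applying the transfer matrix at each hexagon. Your additional remark that the first rows of $S$, $L$, $R$ coincide (so the outer factor may be written as $X\cdot S$ regardless of which edge of $h_1$ is declared active) is the same observation the paper records just after the statement when it notes $X\times S=(1,1,1,1,1,1,1,0,0)$.
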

See the Theorem $2.6$ in reference \cite{shi2023counting}, clearly, the vectors $X\times S$ and $S\times Y$ in the above Theorem \ref{shi-deng} have $X\times S=(1,1,1,1,1,1,1,0,0)$, $S\times Y=(5,3,3,2,3,2,2,4,4)$.

For graph $G$ which is the union of two disjoint graphs $G_1$ and $G_2$, obviously,
any maximal matching of $G$ is consisted of a maximal matching of $G_1$ and a maximal matching
of $G_2$. So we have $\Psi(G)=\Psi(G_1)\times\Psi(G_2)$.
This property will be applied repeatedly.

Now, we are ready to study the hexagonal ring.
As illustrated in Fig. \ref{Auxiliary graph}, graph $H$ is a hexagonal ring formed by $n$ hexagons $H_1, H_2, ..., H_n$ in a sequence, with $H_1$ and $H_n$ sharing edge $dc$.
As depicted in Fig. \ref{Auxiliary graph}, graph $G$ is obtained from $H$ by tearing $H$ along the edge $dc$, the vertex corresponding to $d$ in hexagon $H_1$ is named as $a$, and the vertex corresponding to $c$ is named as $b$.
Graph $F$ serves as an auxiliary graph obtained from $G$ by glue an arbitrary subgraph $K$ to the edge $dc$, see Fig. \ref{Auxiliary graph}.
\begin{figure}[h]
\centering
\includegraphics[width=1.0\linewidth]{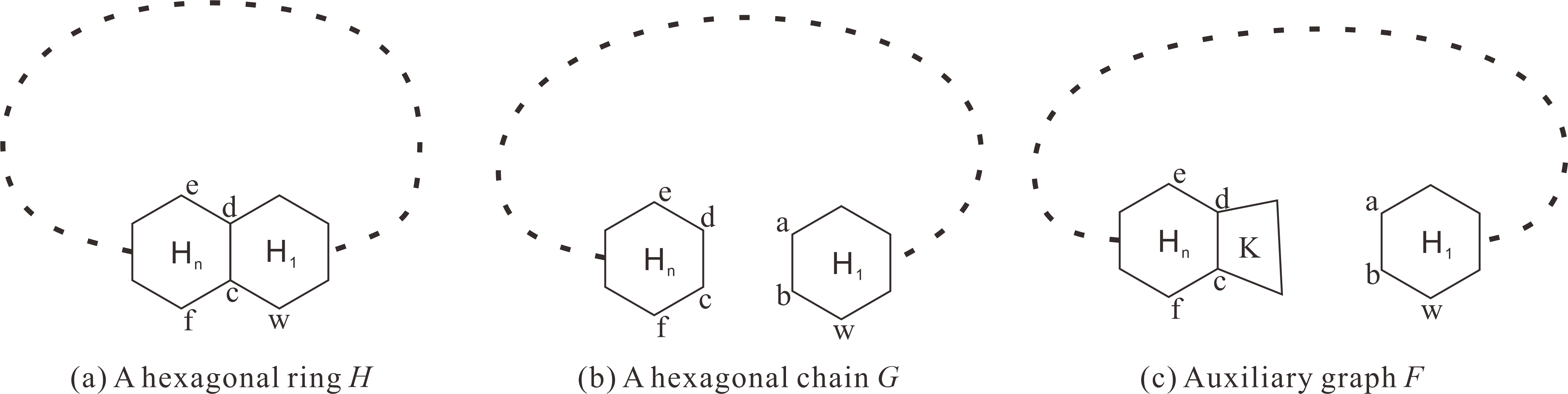}
\caption{Graphs $H$, $G$ and auxiliary graph $F$}\label{Auxiliary graph}	
\end{figure}

In the following, the graphs $H$, $F$ and $G$ are as depicted in Fig. \ref{Auxiliary graph}, and we suppose that the type of $H$ is $M_1M_2\cdots M_n$, where $M_i=t(1), t(2)$ or $t(3)$.
Clearly, $G$ is a hexagonal chain having type $M_2M_3\cdots M_{n-1}$. If we set edge $ab$ as the first edge in $H_1$, and $dc$ as the last edge in $H_n$, then the type of $G$ can also be denoted by $M_1M_2\cdots M_n$.

Applying Theorem \ref{shi-deng-process} repeatedly, we can easily know that $\Psi_{ab}(F)=M\times \Psi_{dc}(K)$ (recall that $M:=\prod\limits_{i=1}^{n}(f(M_i))$),
and moreover the following equation holds for any $i\in\{1,2,\ldots,9\}$.
\begin{multline}\label{1.0}
	[\Psi_{ab}(F)]_{[i]} = M[i,1]\cdot\Psi(K)+M[i,2]\cdot\Psi(K-d)+M[i,3]\cdot\Psi(K-c)+M[i,4]\cdot\Psi(K-d-c)\\
    +M[i,5]\cdot\Psi(K|d,c)+M[i,6]\cdot\Psi(K-d|c)+M[i,7]\cdot\Psi(K-c|d)+M[i,8]\cdot\Psi(K|d)+M[i,9]\cdot\Psi(K|c).
\end{multline}

\subsection{\textbf{Nine Key Lemmas}}
Depending on whether edges $ed$ and $fc$ are included in the maximal matching of $F$, all the maximal matchings of $F$ can be decomposed into nine cases, corresponding to the following nine Lemmas \ref{1.1}-\ref{1.9}.
\begin{lem}\label{1.1}
	If both $ed$ and $fc$ belong to the maximal matching of $F$, then for $i=1,2,\ldots, 9$ we have
	$$[\Psi_{ab}(F|ed,fc)]_{[i]}=M[i,4]\cdot\Psi(K-c-d).$$
\end{lem}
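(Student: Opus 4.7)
The plan is to prove the lemma in two stages: first, a decoupling argument factors the count over $F$ into a count over $G$ times $\Psi(K-d-c)$; second, an induction along the chain identifies the $G$-side count with the fourth column of $M$.

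For the decoupling, suppose $M$ is a maximal matching of $F$ with $ed, fc\in M$. Then $d,c$ are already matched inside $G$ by $ed$ and $fc$, so no edge of $K$ incident to $d$ or $c$ can lie in $M$, i.e.\ $M\cap E(K)\subseteq E(K-d-c)$. Maximality of $M$ in $F$ forces $M\cap E(K)$ to be maximal in $K-d-c$, for otherwise an uncovered edge of $K-d-c$ could be added to $M$. Conversely, any union of a maximal matching of $G$ containing $ed,fc$ with a maximal matching of $K-d-c$ is a maximal matching of $F$ containing $ed,fc$: edges of $K$ incident to $\{d,c\}$ are automatically covered because $d$ and $c$ are covered by $ed$ and $fc$. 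Since the auxiliary conditions recorded by the $i$-th component involve only $a$ and $b$, which lie in $H_1\subseteq G$ and are disjoint from $K$, these conditions factor cleanly through the bijection, giving
\[
[\Psi_{ab}(F|ed,fc)]_{[i]}=[\Psi_{ab}(G|ed,fc)]_{[i]}\cdot\Psi(K-d-c).
\]

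Next I would prove $[\Psi_{ab}(G|ed,fc)]_{[i]}=M[i,4]$ by induction on $n$. For the inductive step, decompose $G=H_1\cup G'$ with $G'=H_2\cup\cdots\cup H_n$, so that Theorem~\ref{shi-deng-process} yields $\Psi_{ab}(G)=f(M_1)\cdot\Psi_{e_2}(G')$, where $e_2$ is the edge shared by $H_1$ and $H_2$. The case analysis underlying that theorem is internal to $H_1$ and orthogonal to the constraint ``$ed, fc\in M$'', which involves only $H_n\subseteq G'$; the transfer identity therefore carries through under the constraint, and the induction hypothesis applied to the chain $G'$ identifies $\Psi_{e_2}(G'|ed,fc)$ with the fourth column of $\prod_{k=2}^{n} f(M_k)$. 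Multiplying on the left by $f(M_1)$ produces the fourth column of $M=\prod_{k=1}^{n}f(M_k)$, which is what we want.

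The base case $n=1$ reduces to verifying $\Psi_{ab}(H_1|ed,fc)=f(M_1)[\,\cdot\,,4]$ for each of $M_1\in\{t(1), t(2), t(3)\}$, and I expect this case analysis to be the main technical step. The observation that makes it manageable is that once $ed$ and $fc$ are in $M$ the four vertices $e,d,f,c$ of $H_1$ are matched, so by maximality the remaining two vertices of $H_1$ must be matched to each other; the matching on $H_1$ is therefore completely rigid. Each of the nine components of $\Psi_{ab}(H_1|ed,fc)$ then evaluates to $0$ (when the component removes one of $e,d,f,c$ or demands the coverage of a vertex whose only surviving neighbour is already matched) or $1$, and checking that the resulting pattern matches the fourth column of $S$, $L$, or $R$ according to whether $ab$ is linear, left-angular, or right-angular with respect to $dc$ in $H_1$ is routine. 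Combining the two stages then yields the lemma.
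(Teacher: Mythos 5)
Your proposal is correct and takes essentially the same approach as the paper: it first splits the count over $F$ into the chain part times $\Psi(K-d-c)$ (the paper does this by noting that $F-e-d-f-c$ has the two components $G-e-d-f-c$ and $K-d-c$), then pushes the computation along the chain via Theorem~\ref{shi-deng-process} and finishes with a hand check of the terminal hexagon(s) against the fourth columns of $S$, $L$, $R$. The only difference is organizational: you package the chain computation as an induction with a one-hexagon base case, whereas the paper expands the matrix product directly and performs the corresponding case analysis ($u=e$, $v=f$, or neither) on the last two hexagons.
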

\begin{proof}
By the definition of maximal matching vector, we have $\Psi_{ab}(F|ed,fc)=(\Psi(F|ed,fc),$
$\Psi(F-a|ed,fc),\Psi(F-b|ed,fc),\Psi(F-a-b|ed,fc),\Psi(F|ed,fc,a,b),\Psi(F-a|ed,fc,b),\Psi(F-b|ed,fc,a),$
$\Psi(F|ed,fc,a),\Psi(F|ed,fc,b))^T=(\Psi(F-e-d-f-c),\Psi(F-e-d-f-c-a),\Psi(F-e-d-f-c-b),\Psi(F-e-d-f-c-a-b),\Psi(F-e-d-f-c|a,b), \Psi(F-e-d-f-c-a|b),\Psi(F-e-d-f-c-b|a),\Psi(F-e-d-f-c|a),\Psi(F-e-d-f-c|b))^T$.
Now, we compute each component of the vector $\Psi_{ab}(F|ed,fc)$ as follows.

Since $F-e-d-f-c$ consists of two components $G-e-d-f-c$ and $K-d-c$ (see Fig. \ref{Auxiliary graph}), $[\Psi_{ab}(F|ed,fc)]_{[1]}=\Psi(F-e-d-f-c)=\Psi(G-e-d-f-c)\times\Psi(K-d-c)$.
We recall that the faces of $G$ are denoted by $H_1, H_2, \ldots, H_{n}$ successively.
Since $\Psi(G-e-d-f-c)=(1,0,0,0,0,0,0,0,0)\times\Psi_{ab}(G-e-d-f-c)=(1,0,0,0,0,0,0,0,0)\times f(M_1)\times f(M_2)\times\cdots\times f(M_{n-2})\times\Psi_{xy}(D)$, where $xy$ is the common edge of faces $H_{n-2}$ and $H_{n-1}$ and it is clockwise from $x$ to $y$ on $H_{n-2}$.
We note that $D$ is a subgraph of $G$, which is the union of $H_{n-1}$ and $H_{n}$ and delete vertices $e,d,f,c$.
For the sake of arguments, we denote the common edge sharing by faces $H_{n-1}$ and $H_{n}$ by $uv$, and let $uv$ be clockwise from $u$ to $v$ on $H_{n-1}$.

If $u=e$, then it is easy to check that $\Psi_{xy}(D)=f(M_{n-1})\times L\times(0,0,0,1,0,0,0,0,0)^T=f(M_{n-1})\times f(M_n)\times(0,0,0,1,0,0,0,0,0)^T$.

If $\{u,v\}\cap\{e,f\}=\emptyset$, then $\Psi_{xy}(D)=f(M_{n-1})\times S\times(0,0,0,1,0,0,0,0,0)^T=f(M_{n-1})\times f(M_n)\times(0,0,0,1,0,0,0,0,0)^T$.

If $v=f$, then $\Psi_{xy}(D)=f(M_{n-1})\times R\times(0,0,0,1,0,0,0,0,0)^T=f(M_{n-1})\times f(M_n)\times(0,0,0,1,0,0,0,0,0)^T$.

So $\Psi_{xy}(D)=f(M_{n-1})\times f(M_{n})\times(0,0,0,1,0,0,0,0,0)^T$.
To sum up, $[\Psi_{ab}(F|ed,fc)]_{[1]}=(1,0,0,0,0,0,0,0,0)\times f(M_1)\times f(M_2)\times\cdots\times f(M_{n-2})\times f(M_{n-1})\times f(M_n)\times(0,0,0,1,0,0,0,0,0)^T\times\Psi(K-d-c)=M[1,4]\cdot\Psi(K-c-d)$.

By the same analysis, if we change $(1,0,0,0,0,0,0,0,0)$ to $(0,1,0,0,0,0,0,0,0)$ in the above process, then we can show that $[\Psi_{ab}(F|ed,fc)]_{[2]}=\Psi(F-e-d-f-c-a)=M[2,4]\cdot\Psi(K-c-d)$.
For $i=3,\ldots, 9$, we only need to change $(1,0,0,0,0,0,0,0,0)$ to the corresponding vectors in the above process. Then we are done.
\end{proof}

\begin{lem}\label{1.2}
	If $ed$ is in the maxiaml matching and $fc$ is not in, but $f$ is covered, then for $i=1,2,\ldots, 9$ we have
	$$[\Psi_{ab}^{-fc}(F|ed,f)]_{[i]}=M[i,2]\cdot\Psi(K-d).$$
\end{lem}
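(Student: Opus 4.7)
The plan is to parallel the proof of Lemma \ref{1.1}, adapting the factorization to the new conditions $ed\in M$, $fc\notin M$, and $f$ covered. First I would unfold each of the nine components of $\Psi^{-fc}_{ab}(F|ed,f)$: since the matching contains $ed$ I may work on $F-e-d$, and since $f$ is covered the edge $fc$ can never be reinserted into any superset of the matching. Consequently a maximal matching of $F$ with the stated properties is exactly a maximal matching of $F-e-d-fc$ with $f$ covered, and for each $i$,
\[
[\Psi^{-fc}_{ab}(F|ed,f)]_{[i]}=[\Psi_{ab}(F-e-d-fc|f)]_{[i]}.
\]

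The key structural step is to factor $\Psi(K-d)$ off. In $G-e-d-fc$ the vertex $c$ is isolated: its only $G$-neighbors are $d$ (deleted) and $f$ (disconnected since $fc$ is removed), and the admissible hexagonal connection types $k_n\in\{1,2,3\}$ force the shared edge $e_n$ between $H_{n-1}$ and $H_n$ to avoid $c$. Hence, up to an isolated $c$ on the $G$-side, $F-e-d-fc$ is the disjoint union of $G-e-d-fc$ and $K-d$. Summing over $c$ covered in $K-d$ versus $c$ uncovered (with all $K$-neighbors of $c$ covered) via $\Psi(K-d|c)+\Psi^{-c}(K-d)=\Psi(K-d)$ packs the $K$-side into a single factor:
\[
[\Psi^{-fc}_{ab}(F|ed,f)]_{[i]}=[\Psi_{ab}(G-e-d-fc|f)]_{[i]}\cdot\Psi(K-d).
\]

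Next I would apply Theorem \ref{shi-deng-process} iteratively to peel off the hexagons $H_1,\ldots,H_{n-2}$ of the chain, producing
\[
\Psi_{ab}(G-e-d-fc|f)=f(M_1)\,f(M_2)\cdots f(M_{n-2})\cdot\Psi_{xy}(D'),
\]
where $D'=H_{n-1}\cup(H_n-e-d-fc)$ carries the side condition that $f$ be covered and $xy$ is the edge shared by $H_{n-2}$ and $H_{n-1}$. A case analysis on the three possible connection types of $H_n$ to $H_{n-1}$ (i.e., whether the shared edge $uv$ satisfies $u=e$, $\{u,v\}\cap\{e,f\}=\emptyset$, or $v=f$, corresponding to $f(M_n)=L$, $S$, $R$ respectively) then verifies
\[
\Psi_{xy}(D')=f(M_{n-1})\cdot f(M_n)\cdot(0,1,0,0,0,0,0,0,0)^T
\]
in each of the three cases. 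Reading off the $(i,2)$-entry of the resulting product $M$ finally gives $[\Psi^{-fc}_{ab}(F|ed,f)]_{[i]}=M[i,2]\cdot\Psi(K-d)$, as claimed.

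The main obstacle is this final case analysis. When $k_n=1$ or $k_n=3$, the vertex $e$ or $f$ is shared between $H_{n-1}$ and $H_n$, so the deletion of $e,d$ and the condition ``$f$ covered'' couple nontrivially with the matching of $H_{n-1}$, and all nine components of $\Psi_{xy}(D')$ must be checked against the matrix product $f(M_{n-1})\,f(M_n)\,e_2$, where $e_2$ denotes the second standard basis column. The verification is routine but tedious, paralleling exactly the computation already performed in Lemma \ref{1.1}, with $e_2$ replacing the $e_4$ that appeared there.
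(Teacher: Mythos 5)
Your proposal is correct and takes essentially the same route as the paper's proof: reduce to $\Psi_{ab}(F-e-d-fc|f)$, split off the factor $\Psi(K-d)$ (the paper simply assigns the now-$G$-isolated vertex $c$ to the $K$-side, writing the components as $G-e-d-c$ and $K-d$, rather than invoking your covered/uncovered sum), peel off $H_1,\ldots,H_{n-2}$ via Theorem \ref{shi-deng-process}, and check the three connection-type cases to get $f(M_{n-1})\times f(M_n)\times(0,1,0,0,0,0,0,0,0)^T$. There are no substantive differences.
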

\begin{proof}
As the proof of Lemma \ref{1.1}, we have $\Psi_{ab}^{-fc}(F|ed,f)=\Psi_{ab}(F-e-d-fc|f)$.
We compute each component of the vector $[\Psi_{ab}^{-fc}(F|ed,f)]$ as follows.

Since $F-e-d-fc$ consists of two components $G-e-d-c$ and $K-d$,
$[\Psi_{ab}^{-fc}(F|ed,f)]_{[1]}=\Psi(F-e-d-fc|f)=\Psi(G-e-d-c|f)\times\Psi(K-d)$.
Since $\Psi(G-e-d-c|f)=(1,0,0,0,0,0,0,0,0)\times\Psi_{ab}(G-e-d-c|f)=(1,0,0,0,0,0,0,0,0)\times f(M_1)\times f(M_2)\times\cdots\times f(M_{n-2})\times\Psi_{xy}(D|f)$, where $xy$ as described in Lemma \ref{1.1}.
We note that $D$ is a subgraph of $G$, which is the union of $H_{n-1}$ and $H_{n}$ and delete vertices $e,d,c$.
The common edge shared by faces $H_{n-1}$ and $H_{n}$ is denoted by $uv$ (as described in the proof of Lemma \ref{1.1}).

If $u=e$, then it is easy to check that $\Psi_{xy}(D|f)=f(M_{n-1})\times L\times(0,1,0,0,0,0,0,0,0)^T=f(M_{n-1})\times f(M_n)\times(0,1,0,0,0,0,0,0,0)^T$.

If $\{u,v\}\cap\{e,f\}=\emptyset$, then $\Psi_{xy}(D)=f(M_{n-1})\times S\times(0,1,0,0,0,0,0,0,0)^T=f(M_{n-1})\times f(M_n)\times(0,1,0,0,0,0,0,0,0)^T$.

If $v=f$, then $\Psi_{xy}(D)=f(M_{n-1})\times R\times(0,1,0,0,0,0,0,0,0)^T=f(M_{n-1})\times f(M_n)\times(0,1,0,0,0,0,0,0,0)^T$.

So $[\Psi_{ab}^{-fc}(F|ed,f)]_{[1]}=M[1,2]\cdot\Psi(K-d)$.
By the same analysis,
for $i=2,\ldots, 9$, we only need to change $(1,0,0,0,0,0,0,0,0)$ to the corresponding vectors in the above process. Then we are done.
\end{proof}

\begin{lem}\label{1.3}
	If $ed$ is in the maximal matching, $fc$ is not in, and $f$ is not covered, for $i=1,2,\ldots, 9$ we have
	$$[\Psi_{ab}^{-\{f,fc\}}(F|ed)]_{[i]}=M[i,6]\cdot\Psi(K-d|c).$$
\end{lem}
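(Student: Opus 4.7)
The plan is to follow exactly the template of Lemmas \ref{1.1} and \ref{1.2}, adapting to the new case where $f$ is forced to be uncovered. The new ingredient is recognising that once $f$ is uncovered, maximality of the matching in $F$ forces $c$ to be covered, which swaps the contribution from $\Psi(K-d)$ (column $2$ of $M$, used in Lemma \ref{1.2}) to $\Psi(K-d\,|\,c)$ (column $6$). First, I would unfold $\Psi_{ab}^{-\{f,fc\}}(F|ed)$ componentwise and use the identity $\Psi^{-\{f,fc\}}(\cdot) = \Psi^{-f}(\cdot)$ already noted in the text to rewrite each component as a count of maximal matchings of $F$ in which $ed$ is contained and $f$ is uncovered, with the appropriate additional removals or coverings of $a, b$ dictated by the coordinate $i$. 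Since $f$ has exactly two neighbours in $F$, namely $c$ (via $fc$) and the vertex $f'$ of $H_n$ adjacent to $f$, maximality in $F$ demands that both $c$ and $f'$ are covered.

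Next, I would perform the key decomposition. Removing $e, d, f$ from $F$ isolates $c$ from the $G$-side (its only $G$-neighbours were $d$ and $f$), so $F - e - d - f$ splits as a disjoint union $(G - e - d - f - c) \sqcup (K - d)$, with $c$ belonging to the $K$-side component. By the multiplicativity of maximal matching counts on disjoint unions, each component of the vector becomes
\[
[\Psi_{ab}^{-\{f,fc\}}(F|ed)]_{[i]} = [\Psi_{ab}(G - e - d - f - c\,|\,f')]_{[i]} \cdot \Psi(K - d\,|\,c),
\]
so the problem reduces to proving the vector identity
\[
\Psi_{ab}(G - e - d - f - c\,|\,f') = M \cdot e_6,
\]
where $e_6 = (0,0,0,0,0,1,0,0,0)^T$, since this yields $[\Psi_{ab}(G - e - d - f - c\,|\,f')]_{[i]} = M[i,6]$.

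Finally, the vector identity is proved by iterated application of Theorem \ref{shi-deng-process}. Peeling off the hexagons $H_1, \ldots, H_{n-2}$ from the $ab$-end gives
\[
\Psi_{ab}(G - e - d - f - c\,|\,f') = f(M_1) f(M_2) \cdots f(M_{n-2}) \cdot \Psi_{xy}(D\,|\,f'),
\]
where $D = (H_{n-1} \cup H_n) - \{e,d,f,c\}$ and $xy$ is the shared edge of $H_{n-2}$ and $H_{n-1}$. The main obstacle is the end-vector identity
\[
\Psi_{xy}(D\,|\,f') = f(M_{n-1}) \cdot f(M_n) \cdot e_6,
\]
which I would verify by the same trichotomy already used in Lemmas \ref{1.1} and \ref{1.2}: the shared edge $uv$ of $H_{n-1}$ and $H_n$ falls into one of the three cases $u = e$, $\{u,v\}\cap\{e,f\}=\emptyset$, or $v = f$, corresponding respectively to $f(M_n)\in\{L, S, R\}$. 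In each case, one more application of Theorem \ref{shi-deng-process} peels $H_{n-1}$ off and extracts the factor $f(M_{n-1})$ from the left, and a short direct calculation on the truncated hexagon $H_n - \{e,d,f,c\}$ (which is essentially the single edge $e'f'$ with $f'$ required to be covered) confirms the identity. Substituting back and recognising the full product as $M \cdot e_6$ yields $[\Psi_{ab}^{-\{f,fc\}}(F|ed)]_{[i]} = M[i,6] \cdot \Psi(K - d\,|\,c)$, completing the proof. The only truly delicate step is the case-by-case verification of the end-vector identity, but it is a finite calculation that directly parallels those in Lemmas \ref{1.1} and \ref{1.2}.
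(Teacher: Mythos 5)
Your overall route is the same as the paper's: use the identity $\Psi^{-\{f,fc\}}=\Psi^{-f}$, split $F$ minus the relevant vertices into a $G$-side and a $K$-side (the $K$-side contributing $\Psi(K-d\,|\,c)$ because maximality forces $c$ to be covered), peel off $H_1,\dots,H_{n-2}$ with Theorem \ref{shi-deng-process}, and finish with the three-case end-vector computation. The only structural difference is that you delete $f$ and impose covering conditions on its neighbours, whereas the paper keeps $f$ in the graph and carries the superscript $-f$ through to the end-vector $\Psi_{xy}^{-f}(D)$; the two are equivalent \emph{provided} the covering condition is imposed on all neighbours of $f$.

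That proviso is where your write-up has a genuine slip. You assert that $f$ has exactly two neighbours in $F$, namely $c$ and $f'$, and your key displayed reduction imposes only ``$f'$ covered'' on the $G$-side. This is correct when the edge $uv$ shared by $H_{n-1}$ and $H_n$ avoids $f$, but in the case $v=f$ of the paper's trichotomy (the case $f(M_n)=R$) the vertex $f$ lies on that shared edge and has degree $3$: besides $c$ and $u$ it has a third neighbour $w$ on $H_{n-1}$. Maximality in $F$ with $f$ uncovered then forces $w$ to be covered as well, so the correct $G$-side count is $\Psi_{ab}(G-e-d-f-c\,|\,u,w)$ rather than $\Psi_{ab}(G-e-d-f-c\,|\,f')$; a maximal matching of $G-e-d-f-c$ covering $u$ but missing $w$ is wrongly admitted by your formula, since the edge $fw$ could then still be added in $F$. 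The fix is either to impose ``all neighbours of $f$ covered'' case by case (which your final verification of the end-vector identity would have to do anyway), or, as the paper does, to keep $f$ in the graph and work with $\Psi_{xy}^{-f}(D)$, which encodes the right condition automatically in all three cases. With that correction your argument goes through and coincides with the paper's proof.
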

\begin{proof}
As the proof of Lemma \ref{1.1}, we have $\Psi_{ab}^{-\{f,fc\}}(F|ed)=\Psi_{ab}^{-f}(F|ed)=\Psi_{ab}^{-f}(F-e-d)$, each component of which is computed as follows.

$[\Psi_{ab}^{-\{f,fc\}}(F|ed)]_{[1]}=\Psi^{-f}(F-e-d)=\Psi^{-f}(G-e-d-c)\times\Psi(K-d|c)$.
Since $\Psi^{-f}(G-e-d-c)=(1,0,0,0,0,0,0,0,0)\times\Psi_{ab}^{-f}(G-e-d-c)=(1,0,0,0,0,0,0,0,0)\times f(M_1)\times f(M_2)\times\cdots\times f(M_{n-2})\times\Psi_{xy}^{-f}(D)$, where $xy$ is the same as described in the proof of Lemma \ref{1.1}.
We note that $D$ is a subgraph of $G$, which is the union of $H_{n-1}$ and $H_{n}$ and delete vertices $e,d,c$.

For the three cases of the sharing edge between $H_{n-1}$ and $H_{n}$, we can check that
$\Psi_{xy}^{-f}(D)=f(M_{n-1})\times f(M_{n})\times(0,0,0,0,0,1,0,0,0)^T$.
So $[\Psi_{ab}^{-\{f,fc\}}(F|ed)]_{[1]}=M[1,6]\cdot\Psi(K-d|c)$.
By the same analysis,
for $i=2,\ldots, 9$, we only need to change $(1,0,0,0,0,0,0,0,0)$ to the corresponding vectors in the above process. Then we are done.
\end{proof}

\begin{lem}\label{1.4}
	If $fc$ is in the maximal matching, $ed$ is not in, and $e$ is covered, then
	$$[\Psi_{ab}^{-ed}(F|e,fc)]_{[i]}=M[i,3]\cdot\Psi(K-c).$$
\end{lem}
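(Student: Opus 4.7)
The plan is to mimic the argument of Lemma~\ref{1.2} under the natural symmetry that swaps the edges $ed \leftrightarrow fc$ and correspondingly the vertices $d \leftrightarrow c$. First I will translate the event ``$fc$ is in the matching, $ed$ is not in, $e$ is covered'' into a statement about a deleted graph: since $fc\in M$ forces the deletion of $f$ and $c$, and $ed\notin M$ forces the deletion of the edge $ed$, we have
$$\Psi_{ab}^{-ed}(F|e,fc)=\Psi_{ab}(F-f-c-ed \mid e).$$

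Next I will decompose $F-f-c-ed$ into two disjoint subgraphs, $G-f-c-d$ and $K-c$. The geometric point to verify is that the neighbours of $d$ inside $H_n$ are only $c$ (removed) and $e$ (connected to $d$ solely through the removed edge $ed$); hence on the $G$-side $d$ is isolated and survives only as a vertex of $K-c$, exactly paralleling how $c$ was handled in Lemma~\ref{1.2}. Because the covered vertex $e$ lives entirely in the $G$-component, the multiplicativity of maximal matching counts over disjoint unions gives
$$\Psi(F-f-c-ed \mid e) = \Psi(G-f-c-d \mid e)\cdot \Psi(K-c),$$
and the analogous factorisation holds for each of the other eight conditional counts that define the components of $\Psi_{ab}(F-f-c-ed|e)$.

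Then I will absorb the $G$-side into the transition matrices by iterating Theorem~\ref{shi-deng-process}:
$$\Psi_{ab}(G-f-c-d \mid e) = f(M_1)\cdot f(M_2)\cdots f(M_{n-2})\cdot \Psi_{xy}(D \mid e),$$
where $xy$ is the shared edge of $H_{n-1}$ and $H_n$ (clockwise from $x$ to $y$ on $H_{n-1}$) and $D=(H_{n-1}\cup H_n)-f-c-d$. The main technical step is to establish the terminal identity
$$\Psi_{xy}(D \mid e) = f(M_{n-1})\cdot f(M_n)\cdot (0,0,1,0,0,0,0,0,0)^T,$$
by a three-case analysis over the position of the shared edge $uv$ between $H_{n-1}$ and $H_n$: the cases $u=e$, $\{u,v\}\cap\{e,f\}=\emptyset$, and $v=f$ correspond to $f(M_n)=L$, $S$, and $R$ respectively. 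Each case reduces to a finite check on the two-hexagon fragment, and by the $d\leftrightarrow c$ symmetry these are direct mirrors of the three verifications inside Lemma~\ref{1.2} (with the second-coordinate slot replaced by the third-coordinate slot).

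Assembling the pieces, left-multiplying by the row unit vector with a single $1$ in position $i$ converts the matrix product into the scalar $M[i,3]$, and the factorisation of the previous paragraph then gives
$$[\Psi_{ab}^{-ed}(F|e,fc)]_{[i]} = M[i,3]\cdot \Psi(K-c)$$
for every $i\in\{1,\ldots,9\}$, as required. The only nontrivial step is the three-case terminal verification; this is mechanical and entirely symmetric to what was carried out in Lemma~\ref{1.2}, so I do not expect any genuine obstacle beyond careful bookkeeping of the hexagon vertices.
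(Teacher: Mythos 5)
Your proposal is correct and follows essentially the same route as the paper's own proof: the same reduction to $\Psi_{ab}(F-f-c-ed\mid e)$, the same split into $G-f-c-d$ and $K-c$, and the same three-case terminal verification yielding the vector $(0,0,1,0,0,0,0,0,0)^T$. The only cosmetic difference is that you frame the argument explicitly as the $d\leftrightarrow c$ mirror of Lemma~\ref{1.2}, which the paper does implicitly.
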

\begin{proof}
As the proof of Lemma \ref{1.1}, we have $\Psi_{ab}^{-ed}(F|e,fc)=\Psi_{ab}(F-f-c-ed|e)$, each component of which is computed as follows.

$[\Psi_{ab}^{-ed}(F|e,fc)]_{[1]}=\Psi(F-f-c-ed|e)=\Psi(G-f-c-d|e)\times\Psi(K-c)$.
Since $\Psi(G-f-c-d|e)=(1,0,0,0,0,0,0,0,0)\times\Psi_{ab}(G-f-c-d|e)=(1,0,0,0,0,0,0,0,0)\times f(M_1)\times f(M_2)\times\cdots\times f(M_{n-2})\times\Psi_{xy}(D|e)$, where $xy$ is the same as described in the proof of Lemma \ref{1.1}.
We note that $D$ is a subgraph of $G$, which is the union of $H_{n-1}$ and $H_{n}$ and delete vertices $f,c,d$.

For the three cases of the sharing edge between $H_{n-1}$ and $H_{n}$, we can check that
$\Psi_{xy}(D|e)=f(M_{n-1})\times f(M_{n})\times(0,0,1,0,0,0,0,0,0)^T$.
So $[\Psi_{ab}^{-ed}(F|e,fc)]_{[1]}=M[1,3]\cdot\Psi(K-c)$.
By the same analysis,
for $i=2,\ldots, 9$, we only need to change $(1,0,0,0,0,0,0,0,0)$ to the corresponding vectors in the above process. Then we are done.
\end{proof}

\begin{lem}\label{1.5}
	If $fc$ is in the maximal matching, $ed$ is not in, and $e$ is not covered, then
	$$[\Psi_{ab}^{-\{e,ed\}}(F|cf)]_{[i]}=M[i,7]\cdot\Psi(K-c|d).$$
\end{lem}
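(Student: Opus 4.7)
The plan is to mirror the proofs of Lemmas~\ref{1.3} and~\ref{1.4}, exploiting the symmetry between the edge pairs $(ed,fc)$ obtained by swapping $(e,d)\leftrightarrow(f,c)$.

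First, I would simplify the left-hand side. Since $e$ being uncovered automatically forces $ed$ not to lie in the matching, the qualifier $-ed$ is redundant, and $\Psi_{ab}^{-\{e,ed\}}(F|cf)=\Psi_{ab}^{-e}(F|cf)$. Enforcing $cf$ in the matching is equivalent to deleting the vertices $c$ and $f$, so the expression further becomes $\Psi_{ab}^{-e}(F-f-c)$.

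Next I would decompose $F-f-c$ across its common vertex $d$, which is the only vertex still shared between the $G$-part and the $K$-part. The crucial observation for this case is that, since $e$ is uncovered, maximality forces every neighbour of $e$ to be saturated; in particular $d$ must be saturated, but its only remaining neighbour on the $G$-side is $e$ itself, so $d$ is necessarily saturated by an edge of $K$. This yields the clean splitting
\[
\Psi^{-e}(F-f-c) = \Psi^{-e}(G-f-c-d)\times \Psi(K-c\mid d),
\]
which already produces the $K$-factor $\Psi(K-c|d)$ appearing in the claimed formula.

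To identify the $G$-factor with $M[1,7]$, I would write $\Psi^{-e}(G-f-c-d)=(1,0,0,0,0,0,0,0,0)\cdot\Psi_{ab}^{-e}(G-f-c-d)$ and apply Theorem~\ref{shi-deng-process} hexagon by hexagon to peel off the factors $f(M_1),\ldots,f(M_{n-2})$, reducing the tail to $\Psi_{xy}^{-e}(D)$, where $xy$ is the edge shared by $H_{n-2}$ and $H_{n-1}$ and $D=(H_{n-1}\cup H_n)-f-c-d$. The main obstacle --- though mostly bookkeeping --- will be verifying the tail identity
\[
\Psi_{xy}^{-e}(D) = f(M_{n-1})\times f(M_n)\times (0,0,0,0,0,0,1,0,0)^T
\]
separately in each of the three possible configurations of the edge $uv$ shared by $H_{n-1}$ and $H_n$ (namely $u=e$, $\{u,v\}\cap\{e,f\}=\emptyset$, and $v=f$), in the same spirit as the tail computations of the previous four lemmas. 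Combining the two steps gives $[\Psi_{ab}^{-\{e,ed\}}(F|cf)]_{[1]}=M[1,7]\cdot\Psi(K-c|d)$; for $i=2,\ldots,9$ I would simply replace the leading row $(1,0,0,0,0,0,0,0,0)$ by the remaining standard basis rows and run the identical calculation to obtain $M[i,7]\cdot\Psi(K-c|d)$.
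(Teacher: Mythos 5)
Your proposal is correct and follows essentially the same route as the paper's own proof: reduce $\Psi_{ab}^{-\{e,ed\}}(F|cf)$ to $\Psi_{ab}^{-e}(F-f-c)$, split off the factor $\Psi(K-c|d)$ (the paper asserts this splitting directly, while you add the correct maximality argument for why $d$ must be saturated inside $K$), peel off $f(M_1),\ldots,f(M_{n-2})$ via Theorem~\ref{shi-deng-process}, and verify the tail identity $\Psi_{xy}^{-e}(D)=f(M_{n-1})\times f(M_n)\times(0,0,0,0,0,0,1,0,0)^T$ in the three configurations of the shared edge. No gaps; the only difference is that you supply slightly more justification for the decomposition step than the paper does.
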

\begin{proof}
As the proof of Lemma \ref{1.1}, we have $\Psi_{ab}^{-\{e,ed\}}(F|cf)=\Psi_{ab}^{-e}(F|cf)=\Psi_{ab}^{-e}(F-f-c)$, each component of which can be computed as follows.

$\Psi_{ab}^{-e}(F-f-c)_{[1]}=\Psi^{-e}(F-f-c)=\Psi^{-e}(G-f-c-d)\times\Psi(K-c|d)$.
Since $\Psi^{-e}(G-f-c-d)=(1,0,0,0,0,0,0,0,0)\times\Psi_{ab}^{-e}(G-f-c-d)=(1,0,0,0,0,0,0,0,0)\times f(M_1)\times f(M_2)\times\cdots\times f(M_{n-2})\times\Psi_{xy}^{-e}(D)$, where $xy$ is the same as described in the proof of Lemma \ref{1.1}.
We note that $D$ is a subgraph of $G$, which is the union of $H_{n-1}$ and $H_{n}$ and delete vertices $f,c,d$.

For the three cases of the sharing edge between $H_{n-1}$ and $H_{n}$, we can check that
$\Psi_{xy}^{-e}(D)=f(M_{n-1})\times f(M_{n})\times(0,0,0,0,0,0,1,0,0)^T$.
So $[\Psi_{ab}^{-e}(F|cf)]_{[i]}=M[i,7]\cdot\Psi(K-c|d)$.
\end{proof}

\begin{lem}\label{1.6}
	If both $ed$ and $fc$ are not in the maximal matching, and $e$ and $f$ are both covered, then
	$$[\Psi_{ab}^{-\{ed,fc\}}(F|e,f)]_{[i]}=M[i,1]\cdot\Psi(K).$$
\end{lem}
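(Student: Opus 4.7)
The plan is to mirror the proofs of Lemmas \ref{1.1}--\ref{1.5}, with one extra step to handle the fact that, unlike those lemmas, the present hypothesis does not force $d$ or $c$ to be deleted from the graph. First, using the identity $\Psi^{-bc}(G\mid b)=\Psi(G-bc\mid b)$ recorded in the excerpt (applied once at $ed$ with $e$ covered and once at $fc$ with $f$ covered), I rewrite
\[
\Psi_{ab}^{-\{ed,fc\}}(F\mid e,f)=\Psi_{ab}(F-ed-fc\mid e,f),
\]
so that every one of the nine components of the left-hand side becomes of the form $\Psi(F-ed-fc\mid e,f,\cdot)$ for various conditions on $a,b$.

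The main step is to prove the factorization
\[
\Psi(F-ed-fc\mid e,f)=\Psi(G-d-c\mid e,f)\cdot\Psi(K).
\]
In $F-ed-fc$, the only $G$-edge incident to $d$ is $dc$ (since $d$'s only $G$-neighbors inside $H_n$ are $e$ and $c$, and the shared edge of $H_{n-1}$ and $H_n$ never touches $d$); analogously for $c$. I split maximal matchings according to whether $dc\in M$. If $dc\in M$, then $d,c$ are matched by $dc$ and the residue is a disjoint union giving $\Psi(G-d-c\mid e,f)\cdot\Psi(K-d-c)$. If $dc\notin M$, then $M_G$ has no edge at $d,c$ so it is a maximal matching of $G-d-c$ with $e,f$ covered, while $M_K$ is an arbitrary maximal matching of $K$ avoiding $dc$, contributing $\Psi(G-d-c\mid e,f)\cdot[\Psi(K)-\Psi(K-d-c)]$ after using $\Psi(K\mid dc\in M)=\Psi(K-d-c)$. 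Summing the two cases collapses the bracket to $\Psi(K)$ and gives the stated factorization.

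Third, applying Theorem \ref{shi-deng-process} repeatedly to peel $H_1,\ldots,H_{n-2}$ off from $G-d-c$ (these peels do not interact with the constraints $\mid e,f$ or with the removed vertices $d,c$, all of which sit inside $H_n$) yields
\[
\Psi_{ab}(G-d-c\mid e,f)=f(M_1)\cdot f(M_2)\cdots f(M_{n-2})\cdot\Psi_{xy}(D-d-c\mid e,f),
\]
where $D=H_{n-1}\cup H_n$ and $xy$ is the edge shared by $H_{n-2}$ and $H_{n-1}$. A case analysis on the shared edge $uv$ of $H_{n-1}$ and $H_n$ (the same three subcases $u=e$, $\{u,v\}\cap\{e,f\}=\emptyset$, $v=f$ used in Lemmas \ref{1.1}--\ref{1.5}, corresponding to $M_n=t(1),t(2),t(3)$) shows that in every case
\[
\Psi_{xy}(D-d-c\mid e,f)=f(M_{n-1})\cdot f(M_n)\cdot(1,0,0,0,0,0,0,0,0)^T,
\]
which for $i=1$ is a direct computation on the path $H_n-d-c$ and for $i=2,\ldots,9$ follows by substituting the corresponding basis vector on the left.

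Combining the three steps and reading off the $i$-th component by applying $e_i^{T}$ on the left yields
\[
[\Psi_{ab}^{-\{ed,fc\}}(F\mid e,f)]_{[i]}=e_i^{T}\cdot M\cdot(1,0,0,0,0,0,0,0,0)^T\cdot\Psi(K)=M[i,1]\cdot\Psi(K),
\]
which is the claim. The main obstacle is the factorization step above: since $d$ and $c$ are not deleted by the hypothesis, one cannot directly imitate Lemmas \ref{1.1}--\ref{1.5}; the $dc\in M$ case split together with the inclusion--exclusion $\Psi(K-d-c)+[\Psi(K)-\Psi(K-d-c)]=\Psi(K)$ is precisely what lets $K$ reappear with no constraint, in agreement with $\Psi(K)$ being the first component of $\Psi_{dc}(K)$.
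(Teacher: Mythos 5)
Your proof is correct and follows essentially the same route as the paper's: rewrite the vector as $\Psi_{ab}(F-ed-fc\mid e,f)$, factor off $\Psi(K)$, and reduce to $\Psi_{xy}(D\mid e,f)=f(M_{n-1})\cdot f(M_n)\cdot(1,0,\dots,0)^T$ via the three-case analysis of the shared edge of $H_{n-1}$ and $H_n$. The only divergence is your case-split on whether $dc\in M$ in the factorization step, which is superfluous: once the edges $ed$ and $fc$ are deleted, the only remaining edges incident to $d$ and $c$ lie in $K$, so $F-ed-fc$ is already the disjoint union of $G-d-c$ and $K$, and $\Psi(F-ed-fc\mid e,f)=\Psi(G-d-c\mid e,f)\cdot\Psi(K)$ follows at once from the product rule, exactly as in Lemmas \ref{1.1}--\ref{1.5}.
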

\begin{proof}
As the proof of Lemma \ref{1.1}, we have $\Psi_{ab}^{-\{ed,fc\}}(F|e,f)=\Psi_{ab}(F-ed-fc|e,f)$, and each component of which can be computed as follows.

$[\Psi_{ab}(F-ed-fc|e,f)]_{[1]}=\Psi(F-ed-fc|e,f)=\Psi(G-d-c|e,f)\times\Psi(K)$.
Since $\Psi(G-d-c|e,f)=(1,0,0,0,0,0,0,0,0)\times\Psi_{ab}(G-d-c|e,f)=(1,0,0,0,0,0,0,0,0)\times f(M_1)\times f(M_2)\times\cdots\times f(M_{n-2})\times\Psi_{xy}(D|e,f)$, where $xy$ is the same as described in the proof of Lemma \ref{1.1}.
We note that $D$ is a subgraph of $G$, which is the union of $H_{n-1}$ and $H_{n}$ and delete vertices $c,d$.

For the three cases of the sharing edge between $H_{n-1}$ and $H_{n}$, we can check that
$\Psi_{xy}(D|e,f)=f(M_{n-1})\times f(M_{n})\times(1,0,0,0,0,0,0,0,0)^T$.
So $[\Psi_{ab}^{-e}(F|cf)]_{[i]}=M[i,1]\cdot\Psi(K)$.
\end{proof}

\begin{lem}\label{1.7}
	If both $ed$ and $fc$ are not in the maximal matching, and $e$ is in the matching while $f$ is not covered, then
	$$[\Psi_{ab}^{-\{ed,fc,f\}}(F|e)]_{[i]}=M[i,9]\cdot\Psi(K|c).$$
\end{lem}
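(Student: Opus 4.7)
Following the template of Lemmas~\ref{1.1}--\ref{1.6}, I would proceed in three stages.

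The first stage is a notational simplification. Since a vertex being uncovered forbids every incident edge from the matching, the stipulation $fc\notin M$ is redundant once $f$ is uncovered; thus, using the identities recorded just before Theorem~\ref{shi-deng-process},
$$\Psi_{ab}^{-\{ed,fc,f\}}(F|e)=\Psi_{ab}^{-\{ed,f\}}(F|e)=\Psi_{ab}^{-f}(F-ed\,|\,e).$$
This puts the quantity in a form directly analogous to the starting expressions in the preceding six lemmas.

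The second stage is the $G$--$K$ decomposition. The graphs $G$ and $K$ share only the two vertices $d,c$ and the edge $dc$. In any maximal matching of $F-ed$ in which $f$ is uncovered, maximality forces every neighbor of $f$ to be covered; the only $G$-neighbor of $c$ in $H_n$ other than $d$ is $f$, so $c$ must be covered, and this coverage must come from the $K$-side (either through an internal $K$-edge or through the shared edge $dc$). No analogous constraint falls on $d$, since after deleting the edge $ed$ no remaining $G$-vertex distinct from $c$ is adjacent to $d$ in $F-ed$. Consequently the $G$-side and $K$-side contributions separate, giving, for the first component,
$$\Psi^{-f}(F-ed\,|\,e)=\Psi^{-f}(G-d-c\,|\,e)\times \Psi(K\,|\,c),$$
and analogously for each of the other eight components of the maximal matching vector.

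The third stage is the transition-matrix expansion on the $G$-side. Applying Theorem~\ref{shi-deng-process} to the hexagons $H_1,\ldots,H_{n-2}$ in succession yields
$$\Psi_{ab}^{-f}(G-d-c\,|\,e)=f(M_1)\times f(M_2)\times\cdots\times f(M_{n-2})\times\Psi_{xy}^{-f}(D\,|\,e),$$
where $xy$ is the edge shared by $H_{n-1}$ and $H_n$ and $D=(H_{n-1}\cup H_n)-d-c$. The central calculation is to verify, exactly as in Lemmas~\ref{1.1}--\ref{1.6}, that in each of the three configurations $M_n=t(1)$ (so $u=e$ and $f(M_n)=L$), $M_n=t(2)$ (so $\{u,v\}\cap\{e,f\}=\emptyset$ and $f(M_n)=S$), and $M_n=t(3)$ (so $v=f$ and $f(M_n)=R$) one has
$$\Psi_{xy}^{-f}(D\,|\,e)=f(M_{n-1})\times f(M_n)\times(0,0,0,0,0,0,0,0,1)^T.$$
Taking the $i$-th component of the resulting product and multiplying by $\Psi(K|c)$ then yields $[\Psi_{ab}^{-\{ed,fc,f\}}(F|e)]_{[i]}=M[i,9]\cdot\Psi(K|c)$, which is the claim.

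The main obstacle is the three-case verification at the end of the third stage. Each configuration requires unwinding the structure of the two-hexagon auxiliary graph $D$, and in every case one must check that the fine-grained vector $\Psi_{xy}^{-f}(D|e)$ lines up precisely with the ninth standard basis vector after multiplication by $f(M_{n-1})\times f(M_n)$. This is a finite combinatorial verification, but it is the point where the specific entries of the matrices $L,S,R$ and the position of the vertices $e,f$ within $H_n$ interact most delicately, and it is the only place where the distinction between $\Psi(K|c)$ and the other eight $K$-side invariants is pinned down.
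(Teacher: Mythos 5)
Your proposal is correct and follows essentially the same route as the paper's own proof: the reduction $\Psi_{ab}^{-\{ed,fc,f\}}(F|e)=\Psi_{ab}^{-f}(F-ed|e)$, the factorization $\Psi^{-f}(G-d-c|e)\times\Psi(K|c)$, and the three-case check that $\Psi_{xy}^{-f}(D|e)=f(M_{n-1})\times f(M_n)\times(0,0,0,0,0,0,0,0,1)^T$ are exactly the steps the authors take. The only difference is that you spell out the maximality argument forcing $c$ to be covered from the $K$-side, which the paper leaves implicit.
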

\begin{proof}
As the proof of Lemma \ref{1.1}, we have $\Psi_{ab}^{-\{ed,fc,f\}}(F|e)=\Psi_{ab}^{-f}(F-ed|e)$, and each component of which can be computed as follows.

$[\Psi_{ab}^{-f}(F-ed|e)]_{[1]}=\Psi^{-f}(F-ed|e)=\Psi^{-f}(G-d-c|e)\times\Psi(K|c)$.
Since $\Psi^{-f}(G-d-c|e)=(1,0,0,0,0,0,0,0,0)\times\Psi_{ab}^{-f}(G-d-c|e)=(1,0,0,0,0,0,0,0,0)\times f(M_1)\times f(M_2)\times\cdots\times f(M_{n-2})\times\Psi_{xy}^{-f}(D|e)$, where $xy$ is the same as described in the proof of Lemma \ref{1.1}.
We note that $D$ is a subgraph of $G$, which is the union of $H_{n-1}$ and $H_{n}$ and delete vertices $c,d$.

For the three cases of the sharing edge between $H_{n-1}$ and $H_{n}$, we can check that
$\Psi_{xy}^{-f}(D|e)=f(M_{n-1})\times f(M_{n})\times(0,0,0,0,0,0,0,0,1)^T$.
So $[\Psi_{ab}^{-e}(F|cf)]_{[i]}=M[i,9]\cdot\Psi(K|c)$.
\end{proof}

\begin{lem}\label{1.8}
	If both $ed$ and $fc$ are not in the matching, and $f$ is in the matching while $e$ is not in the matching,
	$$[\Psi_{ab}^{-\{ed,fc,e\}}(F|f)]_{[i]}=M[i,8]\cdot\Psi(K|d).$$
\end{lem}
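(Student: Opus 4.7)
The plan is to follow the template of Lemmas \ref{1.1}--\ref{1.7}; in particular, the present lemma is the mirror image of Lemma \ref{1.7} under the involution exchanging $e \leftrightarrow f$ and $d \leftrightarrow c$.

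First, I would simplify the constraints on the matching. Since requiring $e$ to be uncovered automatically forces the incident edge $ed$ to avoid every matching, the identity $\Psi^{-\{ed,e\}}=\Psi^{-e}$ lets me absorb $ed$ into the $e$-constraint. Combining this with the identity $\Psi^{-fc}(\cdot|f) = \Psi(\cdot - fc|f)$, which converts the forbidden edge $fc$ (while $f$ is still required to be covered) into a genuine edge deletion, one obtains
$$\Psi_{ab}^{-\{ed,fc,e\}}(F|f) = \Psi_{ab}^{-e}(F-fc|f).$$

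Second, I would decompose the right-hand side across the shared vertices $c,d$. Once $fc$ has been removed, the $G$-edges incident to $\{d,c\}$ are only the shared edge $dc$ and the remaining $H_n$-edges at $d$ and $c$; among these the edge $ed$ is present but excluded from every matching, since $e$ is uncovered. For the matching to be maximal, $d$ must therefore already be covered, and since the $G$-side of the decomposition will lose the vertex $d$, that coverage has to come from $K$. Hence, for the first vector component,
$$[\Psi_{ab}^{-e}(F-fc|f)]_{[1]} = \Psi^{-e}(G-d-c|f)\cdot \Psi(K|d),$$
and applying Theorem \ref{shi-deng-process} iteratively along $H_1,H_2,\ldots,H_{n-2}$ factorises
$$\Psi^{-e}(G-d-c|f) = (1,0,0,0,0,0,0,0,0)\cdot \prod_{i=1}^{n-2} f(M_i) \cdot \Psi_{xy}^{-e}(D|f),$$
where $xy$ denotes the edge shared by $H_{n-2}$ and $H_{n-1}$ (clockwise on $H_{n-2}$) and $D = H_{n-1}\cup H_n - c - d$.

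The main obstacle is the end-of-chain computation, which I would handle by splitting on the three possible ways $H_{n-1}$ and $H_n$ can share an edge $uv$: $u=e$, $\{u,v\}\cap\{e,f\}=\emptyset$, or $v=f$, corresponding to $f(M_n)\in\{L,S,R\}$ respectively. In each sub-case one checks directly that
$$\Psi_{xy}^{-e}(D|f) = f(M_{n-1}) \cdot f(M_n) \cdot (0,0,0,0,0,0,0,1,0)^T,$$
so that the 8th column of $M$ is the one singled out. This verification is entirely parallel to the analogous checks already carried out in Lemmas \ref{1.1}--\ref{1.7} and is straightforward, if tedious. Combining the pieces gives $[\Psi_{ab}^{-\{ed,fc,e\}}(F|f)]_{[1]} = M[1,8]\cdot \Psi(K|d)$, and replacing the leading basis vector $(1,0,\ldots,0)$ by each of the other eight standard basis vectors in the same argument yields the stated formula for all $i\in\{1,\ldots,9\}$.
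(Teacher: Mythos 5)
Your proposal is correct and follows essentially the same route as the paper's own proof: the reduction $\Psi_{ab}^{-\{ed,fc,e\}}(F|f)=\Psi_{ab}^{-e}(F-fc|f)$, the factorisation of the first component as $\Psi^{-e}(G-d-c|f)\cdot\Psi(K|d)$, the telescoping along $H_1,\ldots,H_{n-2}$ via Theorem \ref{shi-deng-process}, and the three-case check that $\Psi_{xy}^{-e}(D|f)=f(M_{n-1})f(M_n)(0,0,0,0,0,0,0,1,0)^T$ are all exactly the steps the paper takes. Your added justification of why $d$ must be covered from the $K$-side (maximality forces it since $e$ is uncovered) is a welcome elaboration of a step the paper leaves implicit.
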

\begin{proof}
As the proof of Lemma \ref{1.1}, we have $\Psi_{ab}^{-\{ed,fc,e\}}(F|f)=\Psi_{ab}^{-e}(F-fc|f)$, and each component of which can be computed as follows.

$[\Psi_{ab}^{-e}(F-fc|f)]_{[1]}=\Psi^{-e}(F-fc|f)=\Psi^{-e}(G-d-c|f)\times\Psi(K|d)$.
Since $\Psi^{-e}(G-d-c|f)=(1,0,0,0,0,0,0,0,0)\times\Psi_{ab}^{-e}(G-d-c|f)=(1,0,0,0,0,0,0,0,0)\times f(M_1)\times f(M_2)\times\cdots\times f(M_{n-2})\times\Psi_{xy}^{-e}(D|f)$, where $xy$ is the same as described in the proof of Lemma \ref{1.1}.
We note that $D$ is a subgraph of $G$, which is the union of $H_{n-1}$ and $H_{n}$ and delete vertices $c,d$.

For the three cases of the sharing edge between $H_{n-1}$ and $H_{n}$, we can check that
$\Psi_{xy}^{-e}(D|f)=f(M_{n-1})\times f(M_{n})\times(0,0,0,0,0,0,0,1,0)^T$.
So $[\Psi_{ab}^{-\{ed,fc,e\}}(F|f)]_{[i]}=M[i,8]\cdot\Psi(K|d).$
\end{proof}

\begin{lem}\label{1.9}
	If both $ed$ and $fc$ are not in the matching, and neither $e$ nor $f$ are in the matching,
	$$[\Psi_{ab}^{-\{ed,fc,e,f\}}(F)]_{[i]}=M[i,5]\cdot\Psi(K|c,d).$$
\end{lem}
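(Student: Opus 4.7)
The plan is to mimic the proofs of Lemmas~\ref{1.1}--\ref{1.8} line by line, since Lemma~\ref{1.9} is the ninth and final case of the uniform nine-way decomposition governed by equation~(\ref{1.0}) and each preceding lemma followed exactly the same template. First, I would simplify the constraint by applying the identity $\Psi^{-\{bc,b\}}(G)=\Psi^{-b}(G)$ twice (once for $\{ed,e\}$ and once for $\{fc,f\}$) to obtain
$$\Psi_{ab}^{-\{ed,fc,e,f\}}(F)=\Psi_{ab}^{-\{e,f\}}(F),$$
since an uncovered vertex cannot be incident to any matching edge.

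Next I would decompose the first component by slicing $F$ along the shared edge $dc$ into its $G$-part and its $K$-part. Because $e,f$ are uncovered and the only $G$-neighbors of $d$ and $c$ (apart from $e,f$ and each other) lie inside $K$, maximality of the matching in $F$ forces $d$ and $c$ to be covered on the $K$-side. This yields the factorization
$$\Psi^{-\{e,f\}}(F)=\Psi^{-\{e,f\}}(G-d-c)\times\Psi(K|c,d),$$
which already explains why the target formula involves column index $5$: the factor $\Psi(K|c,d)$ is exactly the fifth component of $\Psi_{dc}(K)$ by the definition of the maximal matching vector. Using Theorem~\ref{shi-deng-process} repeatedly, I would then expand
$$\Psi^{-\{e,f\}}(G-d-c)=X\times f(M_1)\times\cdots\times f(M_{n-2})\times\Psi_{xy}^{-\{e,f\}}(D),$$
where $X=(1,0,0,0,0,0,0,0,0)$, $xy$ is the edge shared by $H_{n-2}$ and $H_{n-1}$, and $D=(H_{n-1}\cup H_n)-d-c$.

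The only real work is verifying, in each of the three possible sharing configurations between $H_{n-1}$ and $H_n$ (the cases $u=e$, $\{u,v\}\cap\{e,f\}=\emptyset$, and $v=f$, which correspond respectively to $f(M_n)\in\{L,S,R\}$), that
$$\Psi_{xy}^{-\{e,f\}}(D)=f(M_{n-1})\times f(M_n)\times(0,0,0,0,1,0,0,0,0)^T.$$
This is the main obstacle, but it is entirely analogous to the terminal three-case checks carried out in Lemmas~\ref{1.1}--\ref{1.8}: uncovering both $e$ and $f$ in $D$ forces, by maximality, the matching on $H_n-d-c$ into a single deterministic configuration, which is exactly recorded by the unit vector in position $5$ (the slot corresponding to both $x,y$ covered in $\Psi_{xy}$). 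Putting everything together gives $[\Psi_{ab}^{-\{e,f\}}(F)]_{[1]}=M[1,5]\cdot\Psi(K|c,d)$; for $i=2,\ldots,9$ the identical argument with $X$ replaced by the $i$-th unit row vector produces the analogous equality, completing the proof.
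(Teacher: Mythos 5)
Your proposal is correct and follows essentially the same route as the paper's own proof: reduce $\Psi_{ab}^{-\{ed,fc,e,f\}}(F)$ to $\Psi_{ab}^{-\{e,f\}}(F)$, factor the first component as $\Psi^{-\{e,f\}}(G-d-c)\times\Psi(K|c,d)$, push the row vector through $f(M_1)\cdots f(M_{n-2})$, verify the three terminal configurations give $f(M_{n-1})\times f(M_n)\times(0,0,0,0,1,0,0,0,0)^T$, and repeat with the other eight unit row vectors. Your added remarks on why maximality forces $c,d$ to be covered on the $K$-side and why the fifth column appears are consistent with (and slightly more explicit than) the paper's argument.
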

\begin{proof}
As the proof of Lemma \ref{1.1}, we have $\Psi_{ab}^{-\{ed,fc,e,f\}}(F)=\Psi_{ab}^{-\{e,f\}}(F)$, and each component of which can be computed as follows.

$[\Psi_{ab}^{-\{e,f\}}(F)]_{[1]}=\Psi^{-\{e,f\}}(F)=\Psi^{-\{e,f\}}(G-d-c)\times\Psi(K|c,d)$.
Since $\Psi^{-\{e,f\}}(G-d-c)=(1,0,0,0,0,0,0,0,0)\times\Psi_{ab}^{-\{e,f\}}(G-d-c)=(1,0,0,0,0,0,0,0,0)\times f(M_1)\times f(M_2)\times\cdots\times f(M_{n-2})\times\Psi_{xy}^{-\{e,f\}}(D)$, where $xy$ is the same as described in the proof of Lemma \ref{1.1}.
We note that $D$ is a subgraph of $G$, which is the union of $H_{n-1}$ and $H_{n}$ and delete vertices $c,d$.

For the three cases of the sharing edge between $H_{n-1}$ and $H_{n}$, we can check that
$\Psi_{xy}^{-\{e,f\}}(D)=f(M_{n-1})\times f(M_{n})\times(0,0,0,0,1,0,0,0,0)^T$.
So $[\Psi_{ab}^{-\{e,f\}}(F)]_{[i]}=M[i,5]\cdot\Psi(K|c,d)$.
\end{proof}


\subsection{\textbf{Proof of Theorem \ref{Hexagond}}}

\begin{proof}
We use the notations $H, G, F$ as depicted in Fig. \ref{Auxiliary graph}.
All the maximal matchings of $H$ can be classified into the following $4$ major cases and $9$ sub-cases based on whether edges $ed$ and $fc$ are contained.

$\textbf{Case 1:}$ Both $ed$ and $fc$ are in the maximal matching.

$\textbf{SubCase 1.1:}$ Specifically, we need to compute $\Psi(H|ed, fc)$

For the sake of convenience, here, we set $F^1:=F-a-b-e-d-f-c$, $G^1:=G-e-d-f-c-a-b$.
Clearly, $\Psi(H|ed,fc)=\Psi(H-e-d-f-c)=\Psi(G^1)$.
Since $F^1$ consists of two components $G^1$ and $K-d-c$,
$\Psi(F^1)=\Psi(G^1)\cdot\Psi(K-d-c)$.
On the other hand,
by Lemma \ref{1.1}, we have $\Psi(F^1)=\Psi(F-a-b|ed,fc)=[\Psi_{ab}(F|ed,fc)]_{[4]}=M[4,4]\cdot\Psi(K-d-c)$.
So $\Psi(H|ed,fc)=\Psi(G^1)=M[4,4]$.

$\textbf{Case 2:}$ $ed$ is in and $fc$ is not in the maximal matching.

$\textbf{SubCase 2.1:}$ If $f$ is in the maximal matching, then we compute $\Psi^{-fc}(H|ed,f)$.

For the sake of convenience, here, we set $F^2:=F-a-e-d-fc$, $G^2:=G-a-e-d-c$.
Clearly, $\Psi^{-fc}(H|ed,f)=\Psi(H-e-d-fc|f)=\Psi(G^2|f)$.
Since $F^2$ consists of two components $G^2$ and $K-d$,
$\Psi(F^2|f)=\Psi(G^2|f)\cdot\Psi(K-d)$.
On the other hand,
by Lemma \ref{1.2}, we have $\Psi(F^2|f)=\Psi^{-fc}(F-a|ed,f)=[\Psi_{ab}(F|ed,f)]_{[2]}=M[2,2]\cdot\Psi(K-d)$.
So $\Psi^{-fc}(H|ed,f)=\Psi(G^2|f)=M[2,2]$.

$\textbf{SubCase 2.2:}$ If $f$ is not in the maximal matching, then we compute $\Psi^{-\{fc,f\}}(H|ed).$

In this case, we set $F^3=F-a-e-d-f-b-w$, $G^3=G-a-e-d-f-b-w-c$.
Clearly, $\Psi^{-\{fc,f\}}(H|ed)=\Psi^{-f}(H-e-d)=\Psi(H-e-d-f|N_H(f))=\Psi(G^3|N_{G^3}(f))$.
Since $F^3$ consists of two connected components $G^3$ and $K-d$,
$\Psi(F^3|N_{F^3}(f))=\Psi(G^3|N_{G^3}(f))\cdot\Psi(K-d|c)$.
On the other hand,
by Lemma \ref{1.3}, we have $\Psi(F^3|N_{F^3}(f))=\Psi^{-f}(F-a|ed,b)=[\Psi_{ab}^{-f}(F|ed)]_{[6]}=M[6,6]\cdot\Psi(K-d|c)$.
So $\Psi^{-\{fc,f\}}(H|ed)=\Psi(G^3|N_{G^3}(f))=M[6,6]$.

$\textbf{Case 3:}$ $fc$ is in the maximal matching, $ed$ is not in the maximal matching.

$\textbf{SubCase 3.1:}$ If $e$ is in the maximal matching, we compute $\Psi^{-ed}(H|fc,e)$.

This is the symmetry case of Subcase 2.1,
similarly, we can obtaine that $\Psi^{-ed}(H|fc,e)=M[3,3]$.

$\textbf{SubCase 3.2:}$ If $e$ is not in the maximal matching, we compute $\Psi^{-\{ed,e\}}(H|fc)$.

This is the symmetry case of Subcase 2.2,
similarly, we can show that
$\Psi^{-\{ed,e\}}(H|fc)=M[7,7]$.

$\textbf{Case 4:}$ Neither $ed$ nor $fc$ is in the maximal matching.

$\textbf{SubCase 4.1:}$ If both $e$ and $f$ are in the maximal matching, then we compute $\Psi^{-\{fc,ed\}}(H|e,f)$.

For the sake of convenience, here, we set $F^4=F-ed-fc$, $G^4=G-d-c$.
Clearly, $\Psi^{-\{fc,ed\}}(H|e,f)=\Psi(H-ed-fc|e,f)=\Psi(G^4|e,f)$.
Since $F^4$ consists of two components $G^4$ and $K$,
$\Psi(F^4|e,f)=\Psi(G^4|e,f)\cdot\Psi(K)$.
On the other hand,
by Lemma \ref{1.6}, we have $\Psi(F^4|e,f)=\Psi^{-\{fc,ed\}}(F|e,f)=[\Psi_{ab}^{-\{fc,ed\}}(F|e,f)]_{[1]}=M[1,1]\cdot\Psi(K)$.
So $\Psi^{-\{fc,ed\}}(H|e,f)=\Psi(G^4|e,f)=M[1,1]$.

$\textbf{SubCase 4.2:}$ If $e$ is in and $f$ is not in the maximal matching, then we compute $\Psi^{-\{fc,ed,f\}}(H|e)$.

For the sake of convenience, we set $F^5=F-f-ed$, $G^5=G-f-c-d$.
Clearly, $\Psi^{-\{fc,ed,f\}}(H|e)=\Psi^{-f}(H-ed|e)=\Psi(H-ed-f|e, N_{H}(f))=\Psi(G^5|e,b,N_{G}(f)\setminus c)$.
Since $F^5$ consists of two components $G^5$ and $K$,
$\Psi(F^5|e,b,N_F(f))=\Psi(G^5|e,b,N_{G}(f)\setminus c)\cdot\Psi(K|c)$.
On the other hand,
by Lemma \ref{1.7}, we have $\Psi(F^5|e,b,N_F(f))=\Psi^{-\{ed,f\}}(F|e,b)=[\Psi_{ab}^{-\{ed,f\}}(F|e)]_{[9]}=M[9,9]\cdot\Psi(K|c)$.
So $\Psi^{-\{fc,ed,f\}}(H|e)=\Psi(G^5|e,b,N_{G}(f)\setminus c)=M[9,9]$.

$\textbf{SubCase 4.3:}$ If $f$ is in the maximal matching and $e$ is not in, then we compute  $\Psi^{-\{fc,ed,e\}}(H|f)$.

This is the symmetry case of Subcase 4.2,
similarly, we can show
$\Psi^{-\{fc,ed,d\}}(H|f)=M[8,8]$.

$\textbf{SubCase 4.4:}$ If neither $e$ nor $f$ is in the maximal matching, then we compute $\Psi^{-\{fc,ed,e,f\}}(H)$.

For the sake of convenience, here, we set $F^6=F-e-f$ and $G^6=G-e-f-d-c$.
Clearly, $\Psi^{-\{fc,ed,e,f\}}(H)=\Psi^{-\{e,f\}}(H)=\Psi(H-e-f|N_H(e),N_H(f))=\Psi(G^6|a,b,N_F(e)\setminus d, N_F(f)\setminus c)$.
It is easy to see that $F^6$ consists of two components $G^6$ and $K$.
So we have $\Psi(F^6|a,b,N_F(e),N_F(f))$\\
$=$$\Psi(G^6|a,b,N_F(e)\setminus d, N_F(f)\setminus c)\cdot\Psi(K|d,c)$.
On the other hand,
$\Psi(F^6|a,b,N_F(e),N_F(f))=\Psi^{-\{e,f\}}(F|a,b)=[\Psi_{ab}^{-\{e,f\}}(F)]_{[5]}=M[5,5]\cdot\Psi(K|c,d)$ by Lemma \ref{1.9}.
So $\Psi^{-\{fc,ed,f,e\}}(H)=\Psi(G^6|a,b,N_F(e)\setminus d, N_F(f)\setminus c)=M[5,5]$.

In summary, we have
\begin{equation}\label{4}
	\Psi(H)=\sum_{i=1}^{9}M[i,i]=tr(M)
\end{equation}
\end{proof}

It is clearly that for the hexagonal ring $H$, selecting any hexagon as the first face $H_1$ leads to the same conclusion.
The table \ref{table-hexagon-ring} lists the results of several hexagonal rings.

\begin{table}
\centering
\includegraphics[width=0.90\linewidth]{./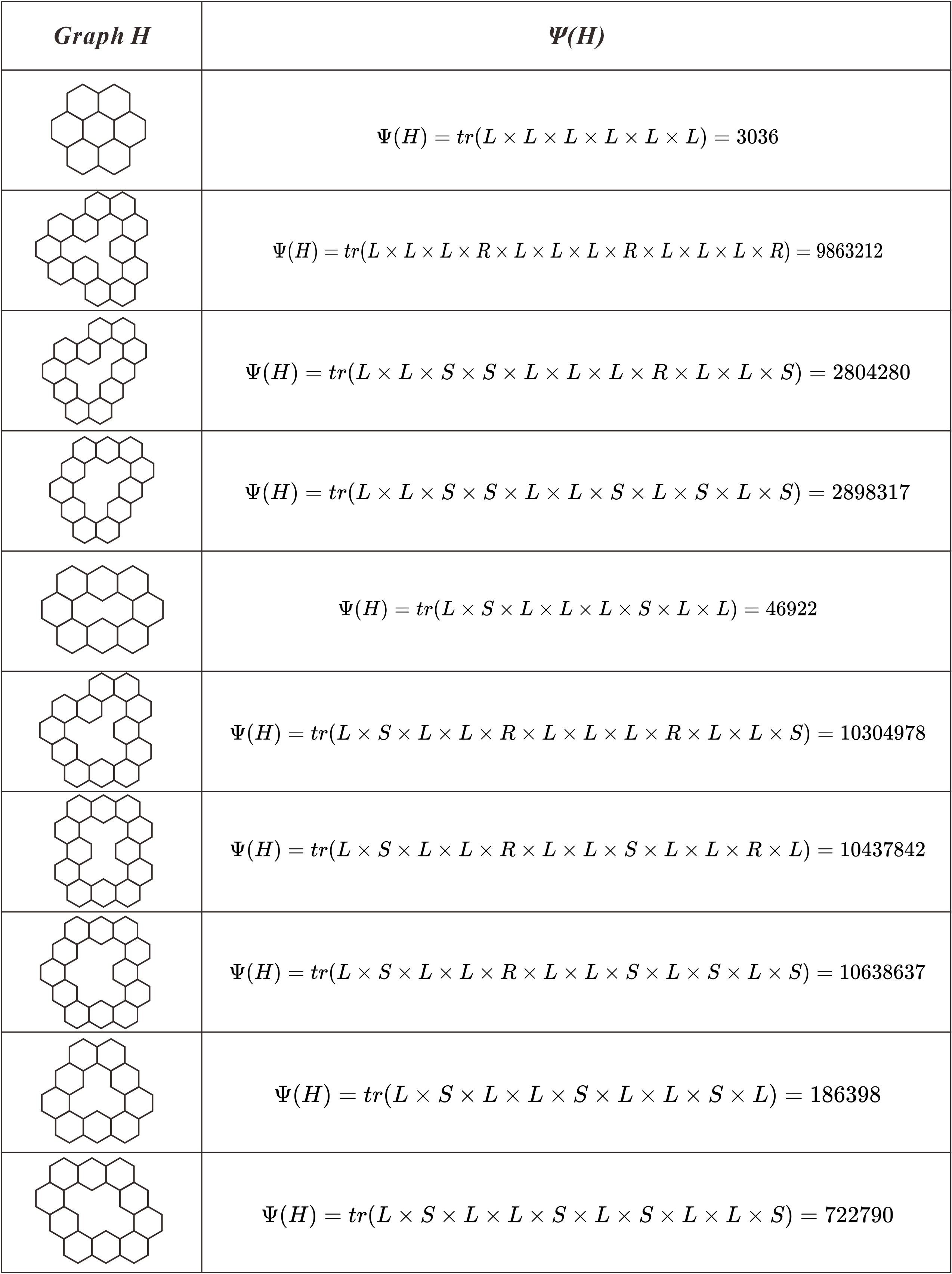}
\caption{The number of maximal matchings for several hexagonal rings.}\label{table-hexagon-ring}
\end{table}

\section{Polygon Rings}
\begin{lem}\label{process-polygon-chain}
Let $Z$ be a polygon chain as depicted in Fig. \ref{fig:polygon} and $J$ is an $m$-length face of $Z$.
If there are $i$ edges between $ab$ and $dc$ on the face $J$ along the clockwise direction,
then
$$\Psi_{ab}(Z)=T_{m,i}\times \Psi_{dc}(K),$$
where $T_{m,i}$ is a $9\times9$ matrix which can be obtained by the Algorithm \ref{algorithm}.
\end{lem}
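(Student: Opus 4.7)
The plan is to extend the hexagonal argument of Theorem \ref{shi-deng-process} (in which the three transition matrices $S$, $L$, $R$ correspond exactly to $m=6$ and $i\in\{1,2,3\}$) to a general $m$-cycle with arbitrary offset $i$. Label the boundary of $J$ clockwise so that $a=v_0$, $b=v_1$, $d=v_{i+1}$, $c=v_{i+2}$; this makes $ab=v_0v_1$ and $dc=v_{i+1}v_{i+2}$, and splits $J\setminus\{ab,dc\}$ into two vertex-disjoint arcs, the ``top'' path $P=b\,v_2\cdots v_i\,d$ of length $i$ and the ``bottom'' path $Q=c\,v_{i+3}\cdots v_{m-1}\,a$ of length $m-i-2$. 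Since $Z=J\cup K$ is glued along the single edge $dc$, every maximal matching of $Z$ restricts to matchings of $J$ and of $K$ that interact only through their joint behaviour at the two seam vertices $d$ and $c$.

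The proof then proceeds by conditioning, in direct analogy with Lemmas \ref{1.1}--\ref{1.9}. For each of the nine conditions on $(a,b)$ defining the components of $\Psi_{ab}(Z)$, one further conditions on the local state at $(d,c)$: whether $dc$ is in the matching, and whether $d$ (respectively $c$) is covered by an edge of $K$. The maximality constraint forces these options into exactly the nine mutually exclusive states that index the components of $\Psi_{dc}(K)$, so that the $K$-side contribution in each case is precisely one entry $[\Psi_{dc}(K)]_{[j]}$. What remains, in each of the $81$ pairs $(k,j)$, is a purely local count: the number of matchings of $E(J)\setminus\{ab,dc\}$ compatible with the fixed conditions at $(a,b)$ and at $(d,c)$ and locally maximal at every internal vertex of $P$ and $Q$. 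Defining $T_{m,i}[k,j]$ to be this local count and summing over $j$ yields $[\Psi_{ab}(Z)]_{[k]}=\sum_{j=1}^{9} T_{m,i}[k,j]\,[\Psi_{dc}(K)]_{[j]}$, which is the claimed matrix identity $\Psi_{ab}(Z)=T_{m,i}\times\Psi_{dc}(K)$.

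Because $P$ and $Q$ share no internal vertex and meet only at the four already-fixed vertices $a,b,c,d$, the count $T_{m,i}[k,j]$ factorizes as the product of two independent one-dimensional path enumerations with prescribed endpoint states, each a routine transfer-matrix recursion along a single path; this is exactly what Algorithm \ref{algorithm} encodes. The main obstacle is not computational but combinatorial bookkeeping: one must verify that the nine boundary states at $(a,b)$ and the nine at $(d,c)$ are both exhaustive and mutually exclusive, and that the maximality requirement straddling the seam edge $dc$ is correctly shared between the $J$-side and $K$-side counts. The nine-component matching-vector formalism of Section 2 was designed with precisely this in mind, so the extension from $m=6$ to general $m$ introduces no new conceptual difficulty; as a consistency check, one recovers $T_{6,1}=L$, $T_{6,2}=S$, $T_{6,3}=R$ and Theorem \ref{shi-deng-process}.
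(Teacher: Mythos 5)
Your central decomposition is sound and is in fact what the paper relies on: the paper's proof of this lemma is a one-line appeal to Algorithm \ref{algorithm}, whose inline comments carry out exactly the bookkeeping you describe --- the nine admissible joint states at the seam vertices $d,c$ (for each seam vertex, either the face-internal edge $de$ resp.\ $fc$ is used, or it is not and then $e$ resp.\ $f$ is or is not otherwise covered) are exhaustive and mutually exclusive, the $K$-side contribution in state $j$ is the $j$-th component of $\Psi_{dc}(K)$, and $T_{m,i}[k,j]$ is the number of admissible $J$-side configurations. The algorithm implements this with pendant vertices $k_1,k_2$ attached to $d,c$ as proxies for ``to be covered from the $K$ side,'' and enumerates maximal matchings of the \emph{whole} auxiliary graph $J'$ (the $m$-cycle with $dc$ replaced by the two pendant edges).

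The step that would fail is your final paragraph: the claim that $T_{m,i}[k,j]$ factorizes as a product of two independent path enumerations over the arcs $P$ and $Q$, and that ``this is exactly what Algorithm \ref{algorithm} encodes.'' Neither holds. The edge $ab$ is still an edge of $J$ joining an endpoint of $P$ to an endpoint of $Q$, and the nine boundary states at $(a,b)$ do not record whether $ab$ itself lies in the matching, so given $(k,j)$ the two arcs are not independent. Concretely, take $m=6$, $i=2$, so the face is the cycle $a\,b\,e\,d\,c\,f\,a$ with $P=b\,e\,d$ and $Q=c\,f\,a$, and take $k=1$ (no condition at $a,b$) and $j=5$ (neither $e$ nor $f$ covered, neither $de$ nor $fc$ used). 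The unique contributing configuration uses the edge $ab$ (together with the pendant edges $k_1d$, $k_2c$), giving $T_{6,2}[1,5]=S[1,5]=1$; but restricted to $P$ alone the matching is empty and violates maximality at the edge $be$ (and likewise on $Q$ at $fa$), so each path count is $0$ and the proposed product is $0\neq 1$. The factorization could be repaired by additionally conditioning on whether $ab$ is in the matching, but as written this step is incorrect, and since the lemma asserts that $T_{m,i}$ is the matrix produced by Algorithm \ref{algorithm}, identifying the correct local count with the algorithm's actual output (a direct enumeration over $J'$, not two path recursions) is a necessary part of the proof.
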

\begin{proof}
By the process of the Algorithm \ref{algorithm}, this conclusion clearly holds.
\end{proof}
\begin{figure}[h]
	\centering
	\includegraphics[width=0.35\linewidth]{./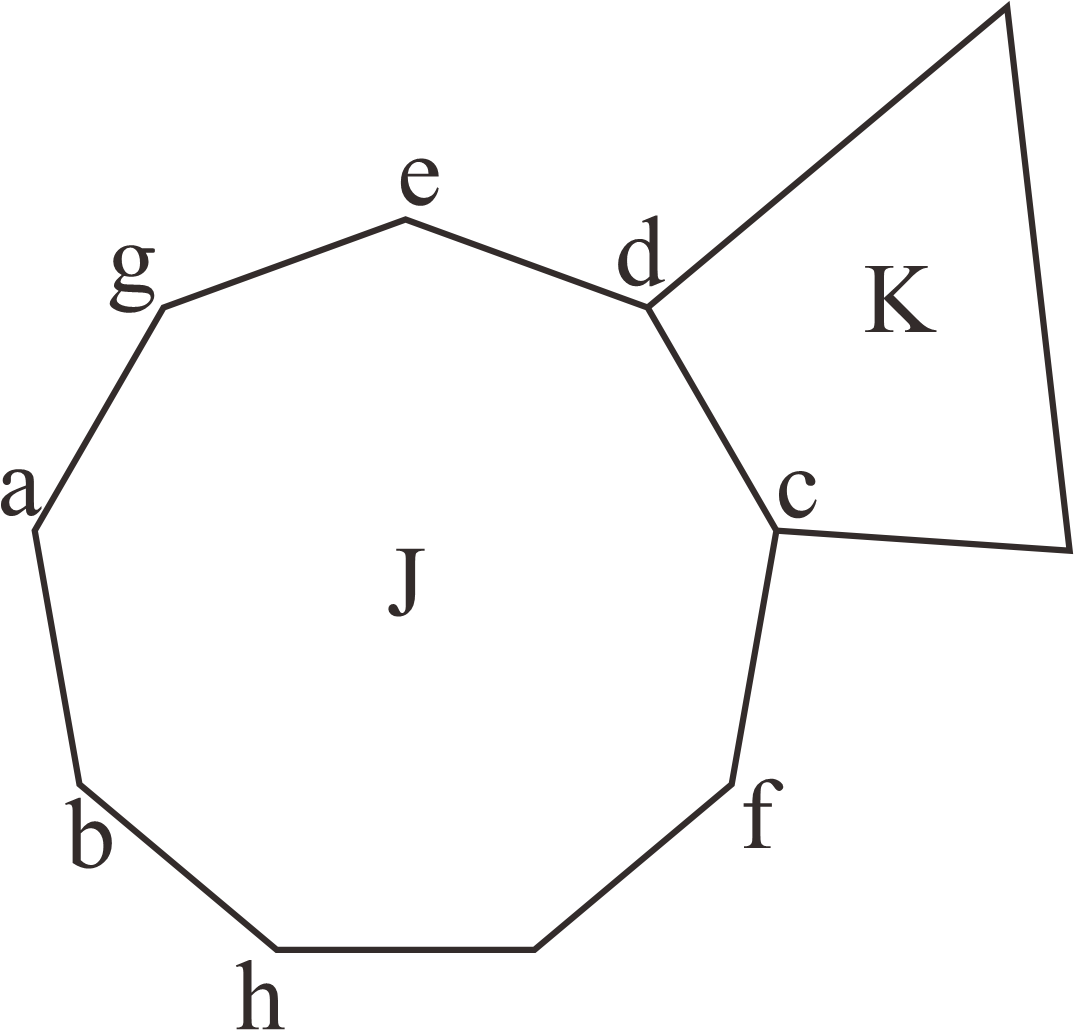}
	\caption{Graph $Z$}
	\label{fig:polygon}
\end{figure}
\begin{alg}\label{algorithm}
\textbf{(The algorithm to determine $T_{m,i}$)}

1. Input $m,i$

2. Create an $m$-sided polygon $J$ based on the value of $m$, arbitrarily select one of the sides as $ab$, and determine the side $dc$ according to the connection type $t(m,i)$.

3. Subtract $dc$ from the polygon $J$, add a new neighbor $k_1$ for $d$, and add a new neighbor $k_2$ for $c$ to obtain the new graph $J'$.

4. Create a $9\times 9$ all-zero matrix $T$ to store the results, initialize variable $x=0$.

5. For variable $x$ ranging from 1 to 9, create empty sets $W, R = \{\}, C = \{\}$, and perform the following operations:

(5.1) When $x=1$, set $W$ to all maximal matchings of $J'$.

(5.2) When $x=2$, set $W$ to all maximal matchings of $J'-a$, and set $R$ to $\{a\}$.

(5.3) When $x=3$, set $W$ to all maximal matchings of $J'-b$, and set $R$ to $\{b\}$.

(5.4) When $x=4$, set $W$ to all maximal matchings of $J'-a-b$, and set $R$ to $\{a,b\}$.

(5.5) When $x=5$, set $W$ to all maximal matchings of $J'$ under the condition that points $a$ and $b$ are covered by the matching, and set $C$ to $\{a,b\}$.

(5.6) When $x=6$, set $W$ to all maximal matchings of $J'-a$ under the condition that point $b$ is covered by the matching, and set $R$ to $\{a\}$, set $C$ to $\{b\}$.

(5.7) When $x=7$, set $W$ to all maximal matchings of $J'-b$ under the condition that point $a$ is covered by the matching, and set $R$ to $\{b\}$, set $C$ to $\{a\}$.

(5.8) When $x=8$, set $W$ to all maximal matchings of $J'$ under the condition that point $a$ is covered by the matching, and set $C$ to $\{a\}$.

(5.9) When $x=9$, set $W$ to all maximal matchings of $J'$ under the condition that point $b$ is covered by the matching, and set $C$ to $\{b\}$.

(5.10) Traverse each matching $W_i(V,E)$ in $W$, create variable $y$, and create set $V'=V\cup R$, $W_i'=W_i-k_1-k_2$. Perform the following operations:
The set $R$ stores the subtracted points, that is, the points where $J$ has been matched with adjacent graphs other than $K$.
The set $V$ stores the points covered in the matching of $J'-R$.
Therefore, $V'$ stores the points covered in the matching of $J'$.

(5.10.1) If points $k_1, e, k_2, f \in V'$, set $y$ to 1 and jump to (5.10.10).\par
$k_1$ is in the matching if and only if $k_1d$ is in the matching, indicating that $de$ is not in the matching; Meanwhile, $e$ is in the matching;
$k_2$ is in the matching if and only if $k_2c$ is in the matching, indicating that $fc$ is not in the matching; Meanwhile, $f$ is in the matching;
If and only if the matching $W_i$ meets the above requirements, $\Psi(F-R|C,E(W_i'))=\Psi(K)$.

(5.10.2) If points $k_2, f \in V'$ and point $k_1 \notin V'$, set $y$ to 2 and jump to (5.10.10).\par
$k_1$ is not in the matching if and only if $de$ is in the matching;
$k_2$ is in the matching if and only if $k_2c$ is in the matching, indicating that $fc$ is not in the matching; Meanwhile, $f$ is in the matching;
If and only if the matching $W_i$ meets the above requirements, $\Psi(F-R|C,E(W_i'))=\Psi(K-d)$.

(5.10.3) If points $k_1, e \in V'$ and point $k_2 \notin V'$, set $y$ to 3 and jump to (5.10.10).\par
$k_1$ is in the matching if and only if $k_1d$ is in the matching, indicating that $de$ is not in the matching; Meanwhile, $e$ is in the matching;
$k_2$ is not in the matching if and only if $fc$ is in the matching;
If and only if the matching $W_i$ meets the above requirements, $\Psi(F-R|C,E(W_i'))=\Psi(K-c)$.

(5.10.4) If points $k_1, k_2 \notin V'$, set $y$ to 4 and jump to (5.10.10).\par
$k_1$ is not in the matching if and only if $de$ is in the matching;
$k_2$ is not in the matching if and only if $fc$ is in the matching;
If and only if the matching $W_i$ meets the above requirements, $\Psi(F-R|C,E(W_i'))=\Psi(K-c-d)$.

(5.10.5) If points $e, f \notin V'$, set $y$ to 5 and jump to (5.10.10).\par
$e$ is not in the matching if and only if $k_1d$ is in the matching, indicating that $de$ is not in the matching;
$f$ is not in the matching if and only if $k_2c$ is in the matching, indicating that $fc$ is not in the matching;
If and only if the matching $W_i$ meets the above requirements, $\Psi^{-\{de,fc\}}(F-R|C,E(W_i'))=\Psi(K|d,c)$.

(5.10.6) If point $k_1, f \notin V'$, set $y$ to 6 and jump to (5.10.10).\par
$k_1$ is not in the matching if and only if $de$ is in the matching;
$f$ is not in the matching if and only if $k_2c$ is in the matching, indicating that $fc$ is not in the matching;
If and only if the matching $W_i$ meets the above requirements, $\Psi^{-fc}(F-R|C,E(W_i'))=\Psi(K-d|c)$.

(5.10.7) If point $k_2, e \notin V'$, set $y$ to 7 and jump to (5.10.10).\par
$k_2$ is not in the matching if and only if $fc$ is in the matching;
$e$ is not in the matching if and only if $k_1d$ is in the matching, indicating that $de$ is not in the matching;
If and only if the matching $W_i$ meets the above requirements, $\Psi^{-de}(F-R|C,E(W_i'))=\Psi(K-c|d)$.

(5.10.8) If points $k_2, f \in V'$ and point $e \notin V'$, set $y$ to 8 and jump to (5.10.10).\par
$k_2$ is in the matching if and only if $k_2c$ is in the matching, indicating that $fc$ is not in the matching; Meanwhile, $f$ is in the matching;
$e$ is not in the matching if and only if $k_1d$ is in the matching, indicating that $de$ is not in the matching;
If and only if the matching $W_i$ meets the above requirements, $\Psi^{-de}(F-R|C,E(W_i'))=\Psi(K|d)$.

(5.10.9) If points $k_1, e \in V'$ and point $f \notin V'$, set $y$ to 9 and jump to (5.10.10).\par
$k_1$ is in the matching if and only if $k_1d$ is in the matching, indicating that $de$ is not in the matching; Meanwhile, $e$ is in the matching;
$f$ is not in the matching if and only if $k_2c$ is in the matching, indicating that $fc$ is not in the matching;
If and only if the matching $W_i$ meets the above requirements, $\Psi^{-fc}(F-R|C,E(W_i'))=\Psi(K|c)$.

(5.10.10) Increment the element at the $x$-th row and $y$-th column of $T$, i.e., set $T[x, y]:= T[x, y] + 1$.

6. Assign the value of $T$ to $T_{m,i}$, and output the result $T_{m,i}$.
\end{alg}

For polygons $J$ with different numbers of edges $m$, corresponding to different connection modes $t(m,i)$, different matrices $T_{m,i}$ are generated by Algprithm \ref{algorithm}. Table \ref{transition matrixes} lists all the cases for $m=5, 6, 7, 8$.

\begin{table}
	\includegraphics[width=1\linewidth]{./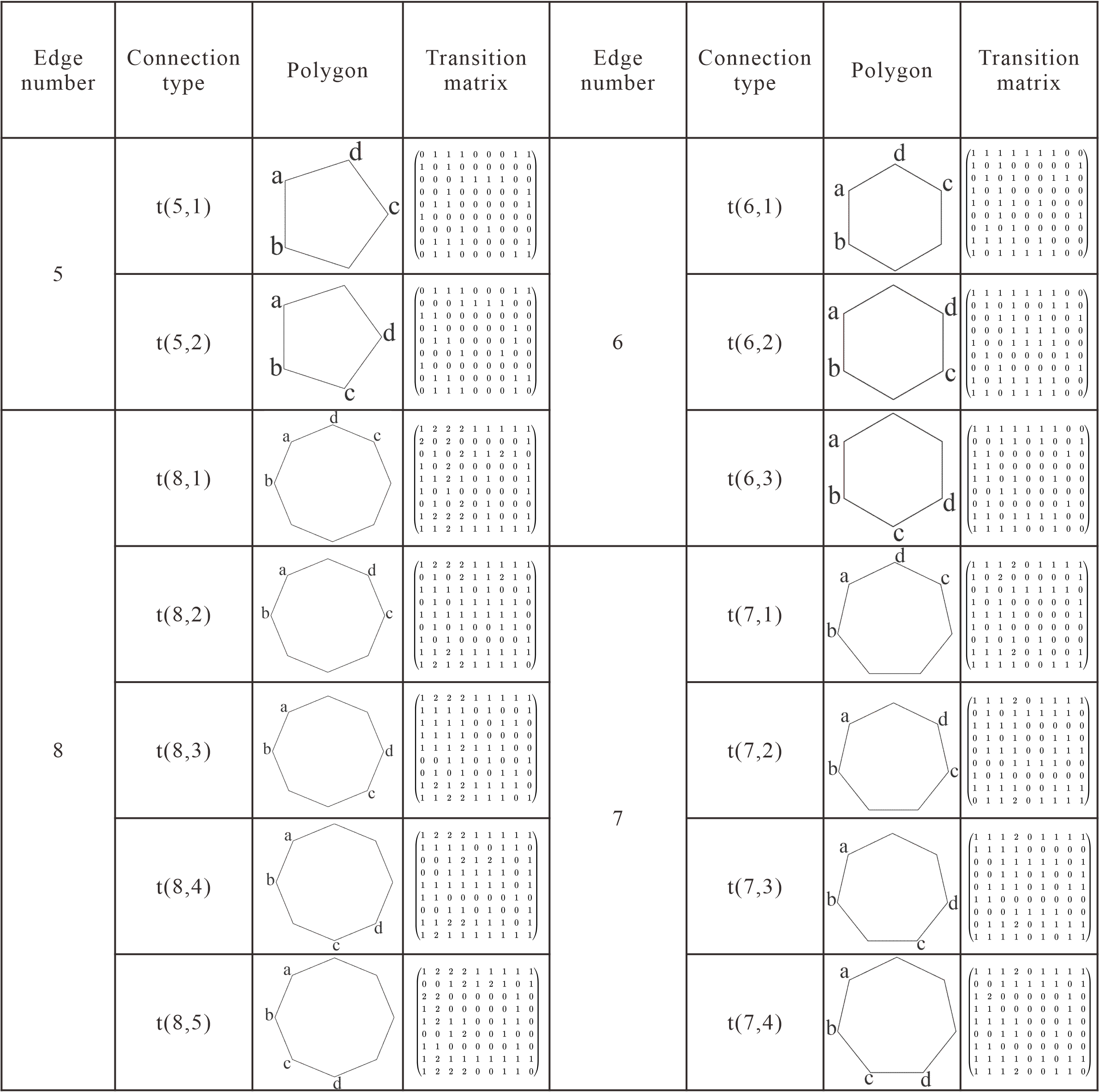}
    \caption {The transition matrixes.}\label{transition matrixes}
\end{table}
\begin{thm}\label{polygon-chain}
Let $Z$ be a polygon chain with $n$ faces and having type $t(s_1,*)M_2\cdots M_{n-1}t(s(n),*)$ where for $i\in\{2,3,\ldots,n-1\}$, $M_i=t(s_i,1), t(s_i,2), \ldots t(s_i,s_i-4)$ or  $t(s_i, s_i-3)$. Then the number of maximal matchings of $Z$ is
 $$\Psi(Z)=X\times T_{s_1,1}\times\prod\limits_{i=2}^{n-1}(f(M_i))\times T_{s_n,1}\times Y,$$
	where $f(M_{i})=T_{s_i,1}$ if $M_i=t(s_i,1)$, $f(M_{i})=T_{s_i,2}$ if $M_i=t(s_i,2)$, $\ldots$, $f(M_{i})=T_{s_i,s_i-3}$ if $M_i=t(s_i,s_i-3)$, and the matrix $T_{s_i,j}$ is depicted in Lemma \ref{process-polygon-chain}, $j\in\{1, 2\cdots, s_i-3\}$, and $X=(1,0,0,0,0,0,0,0,0)$, $Y=(1,1,1,1,1,0,0,1,1)^T$.
\end{thm}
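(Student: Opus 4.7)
My approach directly generalizes the proof of Theorem~\ref{shi-deng} for hexagonal chains, with the specific matrices $L,S,R$ replaced throughout by the general transition matrices $T_{s_i,k_i}$ produced by Algorithm~\ref{algorithm}. The central tool is Lemma~\ref{process-polygon-chain}, which, for any terminal face $J$ of length $m$ of a polygon chain with two specified edges $ab,dc$ on $J$ separated by $i$ clockwise edges, rewrites $\Psi_{ab}(Z)=T_{m,i}\cdot\Psi_{dc}(K)$, where $K$ is the chain with $J$ peeled off (that is, with $J$ replaced by the single edge $dc$).

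I would first label the shared edges $e_2,\ldots,e_n$ and, for $k\in\{2,\ldots,n\}$, write $Z_k$ for the subchain formed by $p_{(k,s_k)},\ldots,p_{(n,s_n)}$. On the first face $p_{(1,s_1)}$ I pick a virtual edge $ab$ lying one step clockwise from $e_2$, making the separation parameter $i=1$; Lemma~\ref{process-polygon-chain} then yields $\Psi_{ab}(Z)=T_{s_1,1}\cdot\Psi_{e_2}(Z_2)$. Iterating this peel-off once per interior face $p_{(j,s_j)}$ with the natural separation $k_j$ read off from $M_j$, I would obtain
\[
\Psi_{ab}(Z)=T_{s_1,1}\cdot\prod_{j=2}^{n-1}f(M_j)\cdot\Psi_{e_n}(Z_{n-1}).
\]
Left-multiplying by $X=(1,0,\ldots,0)$ extracts the first coordinate of $\Psi_{ab}(Z)$, which is exactly $\Psi(Z)$.

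It then remains to establish $\Psi_{e_n}(Z_{n-1})=T_{s_n,1}\cdot Y$ for the single remaining cycle $Z_{n-1}=p_{(n,s_n)}$. I would handle this boundary case by viewing $Z_{n-1}$ as if extended by a trivial ``continuation'' consisting of just a single edge $dc$ placed one clockwise step from $e_n$, and applying the same algorithmic enumeration from Lemma~\ref{process-polygon-chain}. The vector $Y=(1,1,1,1,1,0,0,1,1)^T$ is then recognised as the maximal-matching vector of this two-vertex edge $K=dc$: the components indexed $6$ and $7$ vanish because there is no way to cover only one endpoint of an isolated edge while leaving the other uncovered. Substituting back completes the derivation.

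The main technical obstacle I anticipate is justifying the two terminal conventions rigorously, namely that the boundary parameter $k=1$ at both ends is matched correctly by the concrete vectors $X$ and $Y$, and that the degenerate base case $K=dc$ is faithfully captured by Algorithm~\ref{algorithm}. This reduces to a finite, case-by-case verification that parallels the nine-case decomposition behind Lemmas~\ref{1.1}--\ref{1.9}, but here the interior of each terminal face is an arbitrary $s$-gon, so one must lean on the algorithmic construction of $T_{s,1}$ rather than on the closed-form matrices $S,L,R$ available in the hexagonal setting.
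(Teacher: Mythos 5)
Your proposal is correct and follows essentially the same route as the paper, whose proof is exactly the repeated application of Lemma \ref{process-polygon-chain} to peel off faces, combined with the observation that $Y=(1,1,1,1,1,0,0,1,1)^T$ is the maximal matching vector of the two-vertex complete graph serving as the base case. Your additional remarks on the terminal conventions (that the choice $k=1$ at both ends is immaterial after multiplying by $X$ on the left and $Y$ on the right) are a reasonable elaboration of a detail the paper leaves implicit.
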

\begin{proof}
By using Lemma \ref{process-polygon-chain} repeatedly, and note that the maximal matching vector of a complete graph with $2$ vertices is $(1,1,1,1,1,0,0,1,1)^T$, the conclusion can be obtained as the proof of Theorem \ref{shi-deng-process}.
\end{proof}
Now, we can show Theorem \ref{Ploygon} which is similar to the proof of Theorem \ref{Hexagond} but more tedious. So we omit its proof here.

Using the matrices above, for the polygon ring $H$ in the Fig.\ref{fig:polygon-ring}, its maximal matching number can be calculated based on the connection structures of its constituent polygons, as follows:
$$
\Psi (H) = \text{tr}(T_{(7,3)} \cdot T_{(5,2)} \cdot T_{(8,4)} \cdot T_{(6,3)} \cdot T_{(5,2)} \cdot T_{(6,2)} \cdot T_{(8,3)} \cdot T_{(6,3)} \cdot T_{(6,3)}) = 481614.
$$

\section{Acknowledgements}
This research was supported by NSFC (grant no. 11901458) and the Natural Science Foundation of Shaanxi Province (grant no. 2024JC-YBQN-0053).
\bibliography{papers}

\end{document}